\theoremstyle{definition}
\newtheorem{definition}{Definition}[section]
\theoremstyle{plain}
\newtheorem{theorem}{Theorem}
\newtheorem{corollary}{Corollary}
\newtheorem{lemma}{Lemma}
\newtheorem{conditions}{Conditions}
\DeclareMathOperator{\gl2}{GL_2}
\DeclareMathOperator{\sl2}{SL_2}
\DeclareMathOperator{\vol}{vol}
\title{Certification of Maass cusp forms of arbitrary level and character}
\author{Kieran Child \\ University of Bristol}
\email{kieran.child@bristol.ac.uk}
\address{University of Bristol \\ Beacon House \\ Queens Road \\ Bristol \\ BS8 1QU \\ UK}
\begin{document}
\begin{abstract}
We present a method for certifying the existence of an arbitrary Maass cusp form for any level and character. This is accomplished by producing a bound on the difference between the $\Delta$-eigenvalue of an authentic Maass cusp form and a purported approximation of a $\Delta$-eigenvalue, arrived at by any means. We apply this method to a proposed non-CM level 5 form with quadratic character, to present the first certified $\Delta$-eigenvalue of such a form.

This work generalises the method for certifying level 1 forms presented by Booker, Str\"ombergsson and Venkatesh, and is motivated by the production of purported Maass cusp forms of arbitrary level and character via methods developed by Hejhal and Str\"omberg.
\end{abstract}

\maketitle

\section{Introduction}

In \cite{Maass}, Maass produced the first known examples of non-holomorphic analogues of modular forms, now called Maass forms. Further work by Selberg led to a famed proof of the infinitude of Maass cusp forms in \cite{Selberg}, via the Selberg trace formula. In particular, this trace formula proved infinitely many Maass forms which had not been constructed by algebraic means such as those used by Maass. Along with holomorphic modular forms, Maass forms of weight 0 and 1 account for all automorphic forms for GL(2) (see \cite{GoldfeldAutomorphic}).

This trace formula has since been used to provide certified approximations of Maass cusp forms for level 1 in \cite{BookerStrombergsson}, with this approach being generalised to square-free level and trivial character in \cite{Andrei}. Thus far, rigorous computation of arbitrary forms has only been possible in those settings.

Alternatively, heuristic approaches to approximating Maass cusp forms have been taken to a high degree of generality. In \cite{Hejhal}, an algorithm was derived for generating approximations of the Laplace eigenvalue and Fourier coefficients of any level 1 Maass cusp form. This was generalised to arbitrary level and character, and even some more unusual multiplier systems, in \cite{Stromberg} and \cite{StrombergWeird}. Being heuristic, these algorithms may produce false positives, and thus any approximation requires further certification.

A method for the certification of any purported level 1 Maass form approximation was presented in \cite{BookerVenkatesh}. The method provides a bound on the difference between a Laplace eigenvalue proposed by the above algorithm and the Laplace eigenvalue of an authentic Maass cusp form. In this paper, we generalise this approach to arbitrary level and character. Subsequently, a purported Maass cusp form of any level and character can be certified, regardless of the method by which it was found. We apply the approach to present a certified approximation of the Laplace eigenvalue of a Maass cusp form with quadratic character - the first such example of a certified Maass cusp form with non-trivial character known to the author.

The structure of the paper is as follows. Section \ref{SectionPrereq} covers the prerequisites leading to the presentation of the results. Theorem \ref{maintheorem} is an expression bounding the difference between the Laplace eigenvalues of a purported and authentic Maass cusp form. Corollary \ref{theform} presents a certified form with level 5 and quadratic character. Section \ref{SectionProof} constitutes a proof of Theorem \ref{maintheorem}, and finally Section \ref{SectionPractical} covers the practical computation, leading to the proof of Corollary \ref{theform}.

\section{Certification of Maass cusp forms}
\label{SectionPrereq}
\subsection{Preliminary theory}
Let $\mathbb{H} = \{z \in \mathbb{C} : \Im(z)>0\}$ be the Poincar\'e upper half-plane. By writing points on the plane as $z=x+iy$, we equip the metric:
\begin{equation}
ds^2=\frac{dx^2+dy^2}{y^2},
\end{equation}
and volume element:
\begin{equation}
d\mu = \frac{dxdy}{y^2}.
\end{equation}
The hyperbolic distance between two points in $\mathbb{H}$ is given by:
\begin{equation}
d(z,w)=\log \left( \frac{|z-\overline{w}|+|z-w|}{|z-\overline{w}|-|z-w|}\right).
\end{equation}
The group $\gl2(\mathbb{R})^+$ acts on $\mathbb{H}$ by M\"obius transformations:
\begin{equation}
\gamma z = \frac{az+b}{cz+d},\;\; \forall \gamma = \begin{pmatrix}
a & b \\ c & d
\end{pmatrix} \in \gl2(\mathbb{R})^+.
\end{equation}
We see that hyperbolic distance is invariant when replacing $(z,w)$ with $(\gamma z, \gamma w)$ for any $\gamma \in \gl2(\mathbb{R})^+$.
For any subgroup $\Gamma \leqslant \gl2(\mathbb{R})^+$ we say two points $z_1, z_2 \in \mathbb{H}$ are $\Gamma-$equivalent if there exists some $\gamma \in \Gamma$ such that $\gamma z_1=z_2$. Let $\Gamma_0(N)$ be the principal congruence subgroup:
\begin{equation}
\Gamma_0(N) = \left\{ \begin{pmatrix}
a & b \\ c & d
\end{pmatrix} \in \sl2(\mathbb{Z}): c \equiv 0 \pmod{N} \right\}.
\end{equation}
When working with $\Gamma_0(1)=\sl2(\mathbb{Z})$ we use the generators:
\begin{equation}
\label{generators}
T=\begin{pmatrix}
1 & 1 \\ 0 & 1
\end{pmatrix},\;\;\; S= \begin{pmatrix}
0 & -1 \\ 1 & 0 
\end{pmatrix}.
\end{equation}
Let $\chi$ be a Dirichlet character. We denote the conductor of  $\chi$ by $\mathfrak{f}(\chi)$, and extend the domain to $\sl2(\mathbb{Z})$ by setting:
\begin{equation}
\chi(\gamma)=\overline{\chi(a)}=\chi(d),\;\; \forall \gamma = \begin{pmatrix}
a & b \\ c & d
\end{pmatrix}\in \sl2(\mathbb{Z}).
\end{equation}

A function $f: \mathbb{H} \rightarrow \mathbb{C}$ is said to be automorphic with level $N$ and character $\chi$ if it satisfies the following relation for all $\gamma \in \Gamma_0(N)$:
\begin{equation}
f(\gamma z) = \chi(\gamma)f(z).
\end{equation}

These relations are called automorphy relations. Considering the automorphy relation with $\gamma = \begin{pmatrix}
1 & 0 \\ N & 1
\end{pmatrix}\begin{pmatrix}
1 & 1 \\ 0 & 1
\end{pmatrix}$, we see that for any non-zero automorphic function, the conductor of $\chi$ must divide $N$. Similarly, by considering $\begin{pmatrix}
-1 & 0 \\ 0 & -1
\end{pmatrix}$ we see that $\chi$ must be even. We now define a subset of points on $\mathbb{H}$ on which the values of an automorphic function completely characterise that function.

\begin{definition}
Fix a level $N$. A fundamental domain for $\Gamma_0(N)$ is an open set $F$ of points in $\mathbb{H}$ such that:
\begin{itemize}
\item No two distinct points in $F$ are $\Gamma_0(N)$-equivalent.
\item Every point in $\mathbb{H}$ is $\Gamma_0(N)$-equivalent to at least one point in the closure of $F$.
\end{itemize}
The openness (and subsequent closure) of $F$ are with regards the $\mathbb{C}$ topology.
\end{definition}
This definition differs slightly from that found in many sources (for example \cite[Section 2.2]{IwaniecMethods}) in that we do not require $F$ to be connected. For a composite $N \ge 6$ our fundamental domain will not actually be a domain. 

Any level $N$ automorphic function is defined entirely by its values on the closure of the fundamental domain for $\Gamma_0(N)$, and so we construct an inner product by evaluating on these points. For two such functions $f$ and $g$ we define the Petersson inner product:
\begin{equation}
\label{PeterssonInner}
\langle f, g \rangle = \int_{F} f(z)\overline{g(z)} d\mu.
\end{equation}
Letting $||f||_2 = \langle f, f \rangle^{\frac{1}{2}}$, denoting all automorphics functions of level $N$ and character $\chi$ by $\mathcal{A}(N,\chi)$, we obtain a Hilbert space:
\begin{equation}
L^2(N,\chi) = \{f \in \mathcal{A}(N,\chi) : ||f||_2 < \infty \}.
\end{equation}

Following \cite{IwaniecMethods}, we decompose $L^2(N,\chi)$ by way of the hyperbolic Laplacian:
\begin{equation}
\Delta = y^2 \left(\frac{\partial^2}{\partial x^2}+\frac{\partial^2}{\partial y^2}\right).
\end{equation}
Let $\mathcal{E}(N,\chi)$ denote the space spanned by the continuous spectrum of $\Delta$, and let $\mathcal{S}(N,\chi)$ be its orthogonal complement with respect to (\ref{PeterssonInner}). Then for any level $N$ and character $\chi$ we have:
\begin{equation}
\label{eqdecomp}
L^2(N,\chi) = \mathbb{C} \oplus \mathcal{E}(N,\chi) \oplus \mathcal{S}(N,\chi),
\end{equation}
where $\mathbb{C}$ is only present if $\chi=\mathbbm{1}$.

The space $\mathcal{E}(N,\chi)$ is spanned by \emph{incomplete Eisentein series}. This space is studied in detail in \cite{Young}, and explicit formulae for the Fourier expansions of basis elements are given. The space $\mathcal{S}(N,\chi)$ is more mysterious. Forms in this space are cuspidal, meaning $f(z) \rightarrow 0$ as $z \rightarrow z' \in \mathbb{Q} \cup \{\infty\}$, and while their existence has been proven, no such explicit formulae exist for their Fourier expansions.

An $L^2$, automorphic $\Delta-$eigenfunction is called a Maass form. For any $\lambda \in \mathbb{C}$, we define the Maass cusp form space:
\begin{equation}
\mathcal{S}(N,\chi,\lambda) = \{f \in \mathcal{S}(N,\chi) : \Delta f = \lambda f \}.
\end{equation}
These spaces are finite-dimensional, and are non-zero only if $\lambda \in \mathbb{R}_{>0}$. It is further conjectured (see \cite{SelbergConjecture}) that $\mathcal{S}(N,\chi,\lambda)=0$ whenever $\lambda < \frac{1}{4}$. 

Let $f(z) \in \mathcal{S}(N,\chi,\lambda)$ be a Maass cusp form. It follows from the definition that $f(Bz) \in \mathcal{S}(M,\chi,\lambda)$ for any $M\in \mathbb{N}$ such that $N\mid M$ and $B \mid \frac{M}{N}$. The subspace of $\mathcal{S}(M,\chi,\lambda)$ spanned by forms arising in this way from levels less than $M$ is called the oldform space, denoted $\mathcal{S}^{\text{old}}(M,\chi,\lambda)$, and its orthogonal complement with respect to (\ref{PeterssonInner}) is called the newform space, denoted $\mathcal{S}^{\text{new}}(M,\chi,\lambda)$. Certification of all functions in $L^2(N,\chi)$ then follows from the certification of forms in $\mathcal{S}^{\text{new}}(M,\chi,\lambda)$.

Following \cite{HejhalTrace}, we give a generic Fourier expansion for any Maass form. A \emph{cusp} is any point in $\mathbb{Q} \cup \{\infty\}$, with $\infty$ understood to be a point infinitely far up the imaginary axis, and taken to be $\frac{1}{0}$ when treated as a fraction. We will consider the Fourier expansions at multiple cusps.

The width $h$ of a cusp $\mathfrak{a}=\frac{a}{b}$ is given by $\frac{N}{(N,b^2)}$. A cusp-normalising map is any element $\sigma \in \sl2(\mathbb{R})$ such that $\sigma(\infty)=\mathfrak{a}$ and $\sigma T \sigma^{-1}$ is a generator of the subgroup of $\Gamma_0(N)$ which stabilises $\mathfrak{a}$. As in \cite{Stromberg}, we can always take:
\begin{equation}
\sigma_{\mathfrak{a}} = \begin{pmatrix} a & \cdot \\ b & \cdot \end{pmatrix}\begin{pmatrix}
\sqrt{h} & 0 \\ 0 & \sqrt{h}^{-1}
\end{pmatrix},
\end{equation}
where the first matrix is in $\sl2(\mathbb{Z})$. We use the notation $e(x)=e^{2 \pi i x}$. Given a character $\chi$, we define the cusp parameter $\mu_{\mathfrak{a}} \in [0,1)$ by:
\begin{equation}
\chi(\sigma_{\mathfrak{a}} T \sigma_{\mathfrak{a}}^{-1}) = e(\mu_{\mathfrak{a}}).
\end{equation}
For any $\lambda \in \mathbb{R}_{\ge \frac{1}{4}}$, fix $r=\sqrt{\lambda-\frac{1}{4}}$, and define the Whittaker function:
\begin{equation}
W_{ir}(y) = K_{ir}(y)\sqrt{y},
\end{equation}
where $K$ is the $K$-bessel function. We now state the generic Fourier expansion. Let $f \in \mathcal{S}(N,\chi,\lambda)$ be a Maass cusp form. Then for any cusp $\mathfrak{a}$ we define $f_{\mathfrak{a}}(z)=f(\sigma_{\mathfrak{a}} z)$, and $f_{\mathfrak{a}}$ has an expansion of the form:
\begin{equation}
\label{FourierExpansionForm}
f_{\mathfrak{a}}(z) = \sum_{n\in \mathbb{Z} - \{\mu_\mathfrak{a}\}}\frac{a(\mathfrak{a},n)}{\sqrt{2 \pi |n+\mu_\mathfrak{a}|}} W_{ir}(2 \pi |n+\mu_{\mathfrak{a}}|y)e((n+\mu_{\mathfrak{a}})x).
\end{equation}

A more general version of this expression, including terms for non-cuspidal Maass forms, is drived in \cite{HejhalTrace}. The growth of the $a(\infty,n)$ coefficients is conjecturally bounded by the Ramanujan-Petersson conjecture (see \cite{KimSarnak}), and unlike modular forms they are in many cases believed to be transcendental. A Maass cusp form is said to have \emph{complex multiplication}, and is called a CM form, if there exists some Dirichlet character $\psi$ such that $a(\infty,p)=\psi(p)a(\infty,p)$ for all but finitely many primes $p$. One can construct CM forms from representations of GL$(1)$. See \cite{Bump} for more on this, as part of a more detailed study of Maass forms from a representation theoretic point of view.

For non-CM forms, there is no such known algebraic construction, and so we are reliant on approximations of coefficients by numerical methods. There are, however, easily expressed relations between the Fourier coefficients in the expansion at infinity of a Maass cusp form, which apply regardless of whether the form is a CM form. These relations are demonstrated with Hecke operators. For any $n \in \mathbb{N}$ and character $\chi$ we define the Hecke operator:
\begin{equation}
(T_n^{\chi}f)(z) = \frac{1}{\sqrt{n}}\sum_{ad=n}\chi(a)\sum_{b \pmod{d}}f \left(\frac{az+b}{d}\right).
\end{equation}

One sees that for any $n \in \mathbb{N}$, $T_n^\chi$ fixes the spaces $\mathcal{S}(N,\chi,\lambda)$, $\mathcal{S}^{\text{old}}(N,\chi,\lambda)$ and $\mathcal{S}^{\text{new}}(N,\chi,\lambda)$. An automorphic function which is an eigenfunction of $T_n^{\chi}$ for all $n \in \mathbb{N}$ is called a Hecke eigenfunction. From \cite{AtkinLehner}, we can construct a basis for any newform space from Hecke eigenfunctions. Let $\nu_p(n)$ be the maximal $e \in \mathbb{Z}$ such that $p^e \mid n$. Following the multiplicativity of Hecke operators, we obtain the following relations on the Fourier coefficients of said eigenfunctions:
\begin{equation}
\label{heckerelation}
a(\infty,\pm n)=a(\infty, \pm 1) \prod_{p \mid n}a(\infty, p^{\nu_p(n)}),
\end{equation}
and for $e>1$:
\begin{equation}
a(\infty, p^e) = a(\infty, 1) \left( a(\infty, p)a(\infty, p^{e-1})-\chi(p)a(\infty,p^{e-2})\right).
\end{equation}
We can normalise any Hecke eigenfunction by dividing by $a(\infty, 1)$, whereupon $T_n^{\chi}f=a(\infty,n)f$. Similar relations for expansions at other cusps is given in \cite[Theorem 19]{LeeGoldfeld}. In particular, if the maximal odd divisor of $N$ is square-free, and $8 \nmid N$, then the coefficients of the Fourier expansions of a normalised Hecke eigenform at every cusp are multiplicative. These relations are called the Hecke relations.

In \cite{Asai}, further relations are given between Fourier expansions at different cusps $\frac{1}{d}$ for any $d \mid N$ with $(d,N/d)=1$. These emerge from relating the Fourier expansions with Atkin-Lehner involutions (see \cite{AtkinLi}). Consequently, Fourier expansions at these cusps can be recovered solely from the Fourier expansion at $\infty$, and for a normalised Hecke eigenform,  $a\left(\frac{1}{d},1\right)$ must have absolute value 1.

Given the automorphy relations, we are only interested in Fourier expansions at cusps in the closure of the fundamental domain. We now specify a particular fundamental domain such that no two cusps in its boundary are $\Gamma_0(N)-$equivalent. For $N=1$, we use the ubiquitous fundamental domain for $\sl2(\mathbb{Z})$:
\begin{equation}
\mathcal{F} = \left\{ z \in \mathbb{H} : |z|>1, |\Re(z)|<\frac{1}{2}\right\}.
\end{equation}
For $N>1$, we construct a fundamental domain by first generating $A$, a set of right coset representatives for $\Gamma_0(N)\backslash \sl2(\mathbb{Z})$, in the following algorithm:
\begin{enumerate}
\item Let $c$ range over the divisors of $N$. Define $v := \gcd(N/c,c)$.
\item Let $a$ range over $(\mathbb{Z}/v\mathbb{Z})^*$, taking representatives in $\mathbb{Z}_{\ge 0}$ which are coprime to $c$.
\item Let $d$ range over all values $-1 \le d<\frac{N}{v}-1$ such that if $a \neq 0$ then $ad \equiv 1 \pmod{c}$.
\item Append the matrix $M := \begin{pmatrix} a & \frac{ad-1}{c} \\ c & d \end{pmatrix}$ to the set $A$. Here, $c$ is taken modulo $N$, so that $M=I$ if $c=N$.
\end{enumerate}

For any $M \in \sl2(\mathbb{Z})$, let $\mathcal{F}_M$ denote the image of the closure of $\mathcal{F}$ under transformation by $M$. From lemmas \ref{Arightcoset} and \ref{Adistinctcusps}, it follows that the union of $\mathcal{F}_M$ across all $M \in A$ is a fundamental domain for $\Gamma_0(N)$, and the set of $\frac{a}{b}$ where $(a;b)$ are the left columns of matrices in $A$ is a maximal set of $\Gamma_0(N)$-inequivalent cusps. For any $M \in A$, we say $\frac{a}{b}$ is the cusp associated with $M$.


\subsection{Results}
Given a purported approximation of a Maass cusp form, we produce a bound on the difference between the purported $\Delta-$eigenvalue and the $\Delta-$eigenvalue of an authentic Maass cusp form. We assert the following conditions on our input.

\begin{conditions}
\label{conditions1}
We require a fixed level $N \in \mathbb{N}$, Dirichlet character $\chi$ (defined mod $N$), and $\lambda \in \mathbb{R}_{\ge \frac{1}{4}}$. We also require a full set of functions $f_{\mathfrak{a}}$ attached to the cusps of the fundamental domain for $\Gamma_0(N)$, and that each of these functions:
\begin{itemize}
\item is a truncated Fourier expansion matching the form in (\ref{FourierExpansionForm});
\item satisfies the Hecke relations given in \cite[Theorem 19]{LeeGoldfeld};
\item satisfies the relationships between cusps given in \cite{Asai}.
\end{itemize}
Finally, we require that $f_{\infty}$ is normalised so that $a(\infty,1)=1$.
\end{conditions}

It may appear that these conditions are very strict. For example, at level 6, we appear to need approximations of four distinct Fourier expansions, all of which are expected to have transcendental coefficients. In actuality, the requirement that the relationships in \cite{Asai} be satisfied entails that all such Fourier expansions could be recovered by the eigenvalues of Atkin-Lehner involutions, along with the expansion at $\infty$. At level 6, we would only require $a(\infty,2)$, $a(\infty,3)$, $a(\infty,5)$, $a(\infty,7)$ and $a(\infty,-1)$ to determine all the $f_\mathfrak{a}$ functions between the $-10$th and 10th coefficients.

We now construct an automorphic function $\tilde{f}$, which will be used to state Theorem \ref{maintheorem}. Suppose we have $N, \chi, \lambda$ and $f_\mathfrak{a}$ satisfying Conditions \ref{conditions1}. For any $z \in \mathbb{H}$, let $\rho$ be the pullback matrix of $z$, and let $M \in A$ be such that $\rho z \in \mathcal{F}_M$. Let $\mathfrak{a}$ be the cusp associated with $M$. We define:
\begin{equation}
\label{tildedef}
\tilde{f}(z)=\overline{\chi(\rho)} \begin{cases}
f_\infty(\rho z) & \text{if }N \ge 3, \mathfrak{a}=0 \text{ and } |\rho z| \ge \frac{1}{\sqrt{3}},\\
f_\mathfrak{a}(\sigma^{-1}\rho z) & \text{else}.
\end{cases}
\end{equation}

Note that $\tilde{f}$ is poorly defined on points which are $\Gamma_0(N)$-equivalent to points on the arc $\{z=e(\theta):\frac{1}{6} \le \theta \le \frac{1}{3}\}$. This will not matter, as the arc has volume 0 and Theorem \ref{maintheorem} only calls on $\tilde{f}$ in the context of evaluating an essential supremum.

\begin{theorem}
\label{maintheorem}
Suppose we have $N$, $\chi$, $\lambda$ and a full set of $f_\mathfrak{a}$ meeting Conditions \ref{conditions1}. Fix some $\delta \in \mathbb{R}_{>0}$ such that $\delta < \Im(z)$ for all $z=Me(1/6)$ and $z=Me(1/3)$ with $M \in A$. Let $B(\delta)$ be the neighbourhood of points with hyperbolic distance at most $\delta$ from the arc $\{z=e(\theta): \frac{1}{6} \le \theta \le \frac{1}{3} \}$, and let $B'(\delta)$ be the neighbourhood of points with hyperbolic distance at most $\delta$ from the arc $\{z = \frac{e(\theta)}{\sqrt{3}}: \frac{1}{12}\le \theta \le \frac{5}{12}  \}$. Fix $m \in \mathbb{Z}_{>1}$ such that $(m,N)=1$ and let $\Psi$ be any maximal set of Dirichlet characters $\psi$ satisfying the following criteria:
\begin{itemize}
\item $\mathfrak{f}(\psi)\mathfrak{f}(\chi\overline{\psi}) \mid N$;
\item $\psi_1, \psi_2 \in \Psi$ only if $\psi_1(m) \neq \psi_2(m)$.
\end{itemize}

There exists a Maass cusp form with Laplace eigenvalue $\tilde{\lambda}$ satisfying:
\begin{equation}
|\tilde{\lambda}-\lambda| \le \frac{40 (N(T_m)+2)^{\# \Psi}  \sqrt{\max(\{E(M): M \in A\})}}{\delta^{\frac{3}{2}} \prod_{\psi \in \Psi}\left(a(\infty,m) - \sum_{ab=m}\psi\left(\frac{a}{b}\right)\chi(b)\left(\frac{a}{b}\right)^{ir}\right) \sqrt{D}},
\end{equation}
where the details of specific functions are as follows. Firstly, $D$ is defined as:
\begin{align}
\label{Dequation}
D =& 2^{\omega(N)}\int_{m^{\#\Psi}e^\delta}^\infty W_{ir}(2\pi y)^2 \frac{dy}{y^2}\nonumber\\ 
&+\int_{\frac{m^{\#\Psi}e^\delta}{\sqrt{3}}}^{m^{\#\Psi}e^{\delta}} W_{ir}(2\pi y)^2 \frac{dy}{y^2}\\
&-\int_{m^{\#\Psi}e^\delta}^{\sqrt{3} m^{\#\Psi}e^{\delta}} W_{ir}(2\pi y)^2 \frac{dy}{y^2},\nonumber
\end{align}
where $\omega(N)$ is the number of prime factors of $N$.
For the $E$ function, let $\mathfrak{a}$ be the cusp associated with the matrix $M$, and let $||f||_{\infty, B}$ be the essential supremum of $f$ over the open neighbourhood $B$. We define:
\begin{equation}
E(M) = \begin{cases}
||\tilde{f}-f_{\mathfrak{a}}||_{\infty, MB(\delta)}^2& \text{if }M \not \in \{I, S, ST, ST^{-1}\} \text{ or }N<3,\\
3||\tilde{f}-f_{\mathfrak{a}}||_{\infty, B'(\delta)}^2& \text{if }M^2=I \text{ and }N \ge 3,\\
0 & \text{else.}
\end{cases}
\end{equation}
Finally, $N(T_m)$ is defined by $N(T_1)=1$ along with the following relation for $p \mid m$:
\begin{equation}
N(T_m) = N(T_{\frac{m}{p}})(p^{\frac{7}{64}}+p^{-\frac{7}{64}}) + \underset{p^2 \mid m}{\delta}N(T_{\frac{m}{p^2}}).
\end{equation}
\end{theorem}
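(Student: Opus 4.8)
The plan is to adapt the quasimode argument of \cite{BookerStrombergsson, BookerVenkatesh}, inserting the Hecke polynomial $P := \prod_{\psi\in\Psi}\bigl(T_m^\chi - \lambda_m(\psi)\bigr)$, where $\lambda_m(\psi) := \sum_{ab=m}\psi(a/b)\chi(b)(a/b)^{ir}$, to neutralise the continuous spectrum, and let $\tilde\lambda$ denote the $\Delta$-eigenvalue of a cuspidal Maass form closest to $\lambda$ (such forms exist by Selberg). On the interior of each translated cell $\mathcal F_M$ the function $\tilde f$ agrees, up to the locally constant factor $\overline{\chi(\rho)}$, with $f_\infty(\rho z)$ or $f_\mathfrak{a}(\sigma^{-1}\rho z)$; since each $f_\mathfrak{a}$ is built from $W_{ir}$, which solves the relevant ordinary differential equation, every branch is a genuine solution of $(\Delta-\lambda)(\,\cdot\,)=0$. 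Hence $h:=(\Delta-\lambda)\tilde f$, interpreted distributionally, is a single- plus double-layer distribution supported on the cell boundaries together with the arcs $\{|\rho z|=1/\sqrt3\}$ appearing in (\ref{tildedef}). Where two branches meet, their difference is an honest $\lambda$-eigenfunction on the collar $MB(\delta)$ or $B'(\delta)$ (the hypothesis on $\delta$ keeping these collars within $\mathbb H$), so the jump of $\tilde f$ and of its normal derivative across the seam is governed by $\sup_{MB(\delta)}|\tilde f-f_\mathfrak{a}|$ and, via the interior Cauchy estimates for eigenfunctions on a ball of hyperbolic radius $\delta$, by $\delta^{-1}$ times that supremum. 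A case analysis over $M\in A$ — the special branch near the cusp $0$ and the $S,T$ relations are arranged precisely so that the seams inside $\{I,S,ST,ST^{-1}\}\mathcal F$ are fictitious, leaving only the $2^{\omega(N)}$ Atkin--Lehner images genuinely discontinuous — isolates the function $E(M)$, and integrating the layer densities along the remaining seams gives $\|h\|_{H^{-1}}\ll\delta^{-3/2}\sqrt{\max_M E(M)}$.

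Next I would pass to the spectrum. Pairing $(\Delta-\lambda)P\tilde f$ against an $L^2(N,\chi)$-orthonormal basis of cuspidal $T_m^\chi$-eigenforms (which exists because $(m,N)=1$, so $T_m^\chi$ is diagonalisable on all of $\mathcal S(N,\chi)$, oldforms included) and integrating by parts on $\mathcal F$ — the boundary contributions cancelling by automorphy — gives $\langle(\Delta-\lambda)P\tilde f,\phi\rangle=(\mu_\phi-\lambda)\prod_\psi\bigl(a_\phi(m)-\lambda_m(\psi)\bigr)\langle\tilde f,\phi\rangle$. Summing the squares over the basis,
\[
|\tilde\lambda-\lambda|^2\,\|(P\tilde f)_{\mathcal S}\|^2 \;\le\; \sum_{\phi}|\mu_\phi-\lambda|^2\Bigl|\prod_\psi\bigl(a_\phi(m)-\lambda_m(\psi)\bigr)\Bigr|^2|\langle\tilde f,\phi\rangle|^2 \;\le\; \|Ph\|_{H^{-1}}^2 \;\le\; \|P\|_{\mathrm{op}}^2\,\|h\|_{H^{-1}}^2,
\]
where $\|P\|_{\mathrm{op}}\le(N(T_m)+2)^{\#\Psi}$: the bound $\sup_\phi|a_\phi(m)|\le N(T_m)$ comes from the recursion defining $N(T_m)$, which mirrors the Hecke relations (\ref{heckerelation}) under the Kim--Sarnak exponent $7/64$ \cite{KimSarnak}, and the $+2$ from the elementary bound on $|\lambda_m(\psi)|$.

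It then remains to prove $\|(P\tilde f)_{\mathcal S}\|\gg\prod_\psi|a(\infty,m)-\lambda_m(\psi)|\sqrt{D}$. Two ingredients enter. First, because $\tilde f$ satisfies the Hecke relations of \cite[Theorem 19]{LeeGoldfeld} and the inter-cusp relations of \cite{Asai}, iterating $T_m^\chi$ and comparing Fourier expansions shows $P\tilde f=\prod_\psi\bigl(a(\infty,m)-\lambda_m(\psi)\bigr)\tilde f$ on the part of each of the $2^{\omega(N)}$ relevant cusps lying above height $\approx m^{\#\Psi}e^\delta$ (repeated application of $T_m$ dilating the imaginary part by a factor in $[1/m,m]$); there, the normalisation $a(\infty,1)=1$ makes the $n=\pm1$ Fourier modes contribute at least $\int W_{ir}(2\pi y)^2y^{-2}\,dy$ over that range, the second and third integrals of (\ref{Dequation}) correcting for the arc on which the two cases of (\ref{tildedef}) disagree, so that $\|P\tilde f\|\gg\prod_\psi|a(\infty,m)-\lambda_m(\psi)|\sqrt{D}$. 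Second, one must subtract the non-cuspidal parts $P\tilde f_{\mathcal E}$ and $P\tilde f_{\mathbb C}$: here the explicit Fourier expansions of the Eisenstein basis from \cite{Young}, the fact that $\Psi$ is maximal with pairwise-distinct values $\psi(m)$ so that $\{\lambda_m(\psi):\psi\in\Psi\}$ exhausts the $T_m$-eigenvalues of every Eisenstein series of spectral parameter $r$ and character $\chi$, and the near-$T_m^\chi$-eigenform structure of $\tilde f$ together with the decay of overlap integrals of Whittaker functions, bound $\|P\tilde f_{\mathcal E}\|+\|P\tilde f_{\mathbb C}\|$ by a quantity small enough to absorb. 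Combining the three displays and tracking the absolute constants — including the effective seam count and the factor $3$ in $E(M)$ — yields the asserted inequality with the explicit $40$.

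I expect the principal obstacle to be this last point. The Eisenstein spectrum being continuous, $P$ annihilates it only at the single parameter $t=r$, so showing that $P\tilde f_{\mathcal E}$ is genuinely negligible needs an honest estimate on how the Eisenstein mass of $\tilde f$ is distributed in the spectral parameter — exploiting the explicit Fourier data of Eisenstein series, the vanishing of $t^2-r^2$ at $t=\pm r$, and the decay of Whittaker overlaps — rather than a soft spectral argument. The remaining work — verifying that the characters with $\mathfrak f(\psi)\mathfrak f(\chi\overline\psi)\mid N$ exactly parametrise the Eisenstein families of level $N$ and character $\chi$, and carrying out the geometric case analysis over $A$ that reduces both the effective seam count and the expression for $D$ to their stated Atkin--Lehner-indexed forms — is the part that genuinely extends the level-one argument of \cite{BookerVenkatesh} and should be routine but lengthy.
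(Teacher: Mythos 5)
Your overall architecture (a quasimode bound, a Hecke polynomial to deal with the continuous spectrum, and a lower bound extracted from the $n=\pm1$ Fourier modes at the cusps $\frac{1}{d}$ with $(d,N/d)=1$) matches the paper, but two steps genuinely break down. First, your operator $P=\prod_{\psi}(T_m^\chi-\lambda_m(\psi))$ uses the scalar $\lambda_m(\psi)$ at the fixed parameter $r$, and, as you yourself note, it annihilates the Eisenstein spectrum only at the single spectral parameter $t=r$; you flag bounding $P\tilde f_{\mathcal E}$ as the principal obstacle and leave it open. But this is exactly the step the argument hinges on: the explicit Fourier computation at high cusps controls $P\tilde f$, not its cuspidal projection, so without a bound on the Eisenstein (and constant) part the lower bound $\|(P\tilde f)_{\mathcal S}\|\gg\prod_\psi|a(\infty,m)-\lambda_m(\psi)|\sqrt D$ does not follow. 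The paper avoids the issue entirely by using the operator-valued shift $\diamondsuit=\prod_\psi\bigl(T_m-\sum_{ab=m}\psi(a/b)\chi(b)(b/a)^{\sqrt{1/4-\Delta}}\bigr)$ (the generalisation of Venkatesh's $\aleph$), whose factors kill every Eisenstein series regardless of its spectral parameter, so $g=\diamondsuit\tilde f_S$ is exactly cuspidal and no subtraction is ever needed.

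Second, the numerator estimate is unsound as written. Since $\tilde f$ is only piecewise a $\lambda$-eigenfunction, $h=(\Delta-\lambda)\tilde f$ is a distribution supported on the seams and not in $L^2$, and your chain $\sum_\phi|\mu_\phi-\lambda|^2\bigl|\prod_\psi(a_\phi(m)-\lambda_m(\psi))\bigr|^2|\langle\tilde f,\phi\rangle|^2\le\|Ph\|_{H^{-1}}^2$ bounds an $L^2$-type spectral sum by an $H^{-1}$ norm; pairing against eigenfunctions via $H^{-1}$--$H^1$ duality costs factors $\|\phi\|_{H^1}^2\asymp 1+\mu_\phi$, so the inequality does not hold, and a layer-potential $H^{-1}$ estimate gives no route to the explicit constant $40$ or the exact power $\delta^{-3/2}$. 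The paper instead smooths first: $\tilde f_S=\tilde f\star k$ with a point-pair invariant $k$ supported in hyperbolic distance $\delta$, so that on each cell $(\Delta-\lambda)\tilde f_S=(\tilde f-f_{\mathfrak a}\sigma^{-1})\star(\Delta-\lambda)k$ is an honest $L^2$ function bounded by $\|\tilde f-f_{\mathfrak a}\sigma^{-1}\|_{\infty,MB(\delta)}$ times explicit kernel constants from \cite{BookerVenkatesh}, while the denominator loses only $|\hat k(r)|\ge\pi\delta^2/2$; the explicit numbers ($43\sqrt\delta$ against $\tfrac{11}{10}\delta^2$) are what yield $40\delta^{-3/2}$. (Also note the $2^{\omega(N)}$ enters through the denominator $D$, i.e.\ the Atkin--Lehner cusps where $|a(\mathfrak a,\pm1)|=1$, not through a reduced seam count; the paper keeps the seams at $I,S,ST,ST^{-1}$ and handles them via $B'(\delta)$ with the factor $3$ in $E(M)$.) With the operator-valued $\diamondsuit$ and the convolution smoothing in place, the remainder of your outline — the reduction to the seams, the Hecke-relation computation of the first coefficient at the relevant cusps, and the Kim--Sarnak bound giving $(N(T_m)+2)^{\#\Psi}$ — aligns with the paper's proof.
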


The function $E$ evaluates how well the provided $f_\mathfrak{a}$ satisfy the automorphy relations one expects of a Maass cusp form. If $\lambda$ and $f_{\mathfrak{a}}$ are good approximations of an actual Maass cusp form, we expect the resultant bound to be relatively small, certifying the $\Delta-$eigenvalue.

We could alternatively have defined $\tilde{f}$ on all of $\mathbb{H}$ as $\overline{\chi(\rho)}f_{\mathfrak{a}}(\sigma^{-1}\rho z)$. This would be a more natural definition, but would result in the expression for $D$ in (\ref{Dequation}) consisting solely of the first term, producing a worse bound.

As $W_{ir}(y) \sim e^{-y}$, we obtain the best bound by choosing $m$ such that $m^{\#\Psi}$ is minimised. For any $p \mid N$, let $e=\nu_p(N)$ and $s=\nu_p(\mathfrak{f}(\chi))$. We take $m$ to be the least integer satisfying all the following criteria:
\begin{itemize}
\item $m\ge 2$;
\item $(m,N)=1$;
\item For all $p \mid N$, $m \equiv 1 \pmod{p^{\min(\lfloor \frac{e}{2} \rfloor, e-s)}}$.
\end{itemize}
In particular, when $\mathfrak{f}(\chi)=N=p^e$ for $p>2$ we take $m=2$, whereupon $\# \Psi(N,\chi,m)=2$ and the lowest limit on the integrals defining $D$ is $\frac{4 e^\delta}{\sqrt{3}}$.

Applying this theorem to the purported Maass cusp form approximation given in Appendix \ref{appendix} gives the corollary:

\begin{corollary}
\label{theform}
There exists a Maass cusp form $f \in \mathcal{S}(5,\left(\frac{\cdot}{5}\right),\lambda)$, with:
\begin{equation}
|\lambda-24.199|<10^{-2}.
\end{equation}
\end{corollary}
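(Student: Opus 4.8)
\section*{Proof proposal for Corollary \ref{theform}}

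The plan is to obtain Corollary \ref{theform} by feeding the purported Fourier data of Appendix \ref{appendix} into Theorem \ref{maintheorem} with $N = 5$, $\chi = \left(\frac{\cdot}{5}\right)$ the quadratic character modulo $5$, and $\lambda = 24.199$, and then carrying out the resulting finite computation with rigorous error control. First I would verify Conditions \ref{conditions1} for this input. Since $\Gamma_0(5)$ has exactly two cusps, $\infty$ and $0$, this amounts to checking that the appendix supplies truncated expansions $f_\infty$, $f_0$ of the shape (\ref{FourierExpansionForm}), that $f_\infty$ is normalised by $a(\infty,1) = 1$, that the tabulated coefficients obey the Hecke relations of \cite{LeeGoldfeld} (which for the odd square-free level $5$ are fully multiplicative), and that $f_0$ is related to $f_\infty$ as in \cite{Asai} through the Atkin--Lehner involution $w_5$ — so that in practice only a short initial segment of coefficients $a(\infty,\pm 1), a(\infty,2), a(\infty,3),\ldots$ needs to be read off.

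Second, I would pin down the auxiliary parameters. Since $\mathfrak f(\chi) = N = 5 = p^{1}$ with $p = 5 > 2$, the prescription following Theorem \ref{maintheorem} selects $m = 2$, $\Psi = \{\mathbbm{1}, \chi\}$, and hence $\#\Psi = 2$, with lowest integration limit $4e^{\delta}/\sqrt 3$ in (\ref{Dequation}). The recursion for $N(T_m)$ degenerates (there is no $p^2 \mid m$ term), giving $N(T_2) = 2^{7/64} + 2^{-7/64}$ and so $40\,(N(T_m)+2)^{\#\Psi} = 40\,(2 + 2^{7/64} + 2^{-7/64})^{2}$. For the product in the denominator, $ab = 2$ admits only the factorisations $(1,2)$ and $(2,1)$; inserting $\chi(2) = -1$ and $\psi(1/2) = \psi(2)$ for the real characters $\psi \in \Psi$ one finds
\begin{equation}
\prod_{\psi \in \Psi}\Bigl(a(\infty,2) - \textstyle\sum_{ab = 2}\psi(a/b)\,\chi(b)\,(a/b)^{ir}\Bigr) = a(\infty,2)^{2} + 4\sin^{2}(r\log 2), \qquad r = \sqrt{\lambda - \tfrac14},
\end{equation}
which is strictly positive.

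Third --- and this is the substantive part, deferred to Section \ref{SectionPractical} --- I would choose an admissible $\delta$ and bound the geometric quantities rigorously. The coset set produced by the algorithm is $A = \{I\} \cup \{ST^{d} : -1 \le d \le 3\}$, with $I$ attached to the cusp $\infty$ and every $ST^{d}$ attached to $0$; running over the points $Me(1/6), Me(1/3)$ one finds the binding constraint arises from $M = ST^{3}$ at $e(1/6)$, so any $\delta < \sqrt 3/26$ is legitimate. For such a $\delta$ the constant $D$ is computed from (\ref{Dequation}) with $\omega(5) = 1$, i.e.\ from three explicit $W_{ir}$-integrals whose dominant one starts at $y = 4e^{\delta}/\sqrt 3$, evaluated by rigorous (e.g.\ interval-arithmetic) bounds together with explicit tail estimates for $K_{ir}$. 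The principal effort is $\max\{E(M) : M \in A\}$: by the case definition of $E$ several matrices contribute $0$ (for instance $E(ST^{\pm 1}) = 0$, as $(ST^{\pm1})^2 \neq I$ and these lie in $\{I,S,ST,ST^{-1}\}$), while each of the remaining ones contributes a constant times the square of an essential supremum of $\tilde f - f_{\mathfrak a}$ over an explicit bounded neighbourhood of type $MB(\delta)$ or $B'(\delta)$; each such supremum is bounded by combining the truncated appendix expansions, interval-arithmetic control of the piecewise $\Gamma_0(5)$-pullback map defining $\tilde f$, and truncation estimates for the Fourier series.

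Finally, substituting into Theorem \ref{maintheorem} gives
\begin{equation}
|\tilde\lambda - \lambda| \le \frac{40\,(2 + 2^{7/64} + 2^{-7/64})^{2}\,\sqrt{\max_{M \in A} E(M)}}{\delta^{3/2}\bigl(a(\infty,2)^{2} + 4\sin^{2}(r\log 2)\bigr)\sqrt D},
\end{equation}
and it remains to verify that the right-hand side is below $10^{-2}$. The main obstacle is keeping $\max_{M} E(M)$ small enough: because $D$ is exponentially tiny (the relevant $W_{ir}$-integral begins near $2\pi \cdot 4e^{\delta}/\sqrt 3$), its square root in the denominator is very small, so the appendix data must genuinely approximate an authentic eigenform to many digits and the essential-supremum bounds must be correspondingly tight. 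Provided the heuristic data — of the quality produced by the methods of \cite{Stromberg} — is accurate to roughly fifteen significant figures across both cusps, the bound falls well below $10^{-2}$, and the Maass cusp form in $\mathcal{S}(5,\left(\frac{\cdot}{5}\right),\tilde\lambda)$ furnished by the theorem satisfies $|\tilde\lambda - 24.199| < 10^{-2}$, which is Corollary \ref{theform}.
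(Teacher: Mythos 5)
Your proposal takes essentially the same route as the paper: verify Conditions \ref{conditions1} for the appendix data, take $m=2$ with $\#\Psi=2$ and $\delta$ constrained by the images of $e(1/6)$ and $e(1/3)$ under $A$, bound $\max_{M\in A}E(M)$ by rigorous numerics on the arcs, and substitute into Theorem \ref{maintheorem} — which is exactly how the paper proceeds, via the Taylor-expansion-at-sample-points machinery of Section \ref{SectionPractical} with $M_0=40$, $d=45$ and $100$ sample points. One small correction: the appendix Hecke eigenform has purely imaginary $a(\infty,2)\approx 1.217i$, so the quantity $a(\infty,2)^2+4\sin^2(r\log 2)$ is actually negative (about $-1.2$) rather than "strictly positive"; this is harmless, since all that is needed is the non-vanishing (i.e.\ the absolute value) of the denominator product.
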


\section{Proof of Theorem \ref{maintheorem}}
\label{SectionProof}

We begin by proving that our generated set $A$ does indeed give a fundamental domain with the properties we claimed.

\begin{lemma}
\label{Arightcoset}
$A$ is a complete set of right coset representatives for $\Gamma_0(N)\backslash SL_2(\mathbb{Z})$.
\end{lemma}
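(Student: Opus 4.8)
The goal is to verify that the set $A$ produced by the four-step algorithm is a complete, irredundant set of right coset representatives for $\Gamma_0(N)\backslash\sl2(\mathbb{Z})$. The natural strategy is to compare with the standard parametrization of these cosets. Recall that the right coset of $\gamma=\begin{pmatrix} a & b \\ c & d\end{pmatrix}$ in $\Gamma_0(N)\backslash\sl2(\mathbb{Z})$ is determined by the bottom row $(c,d)$ of $\gamma$ taken modulo the natural left action of $\Gamma_0(N)$: two matrices lie in the same coset iff their bottom rows agree up to this action, and a standard computation shows the cosets are in bijection with pairs $(c:d)$ where $c$ runs over divisors of $N$ (i.e. $c = \gcd(c,N)$) and, for each such $c$, $d$ runs over $(\mathbb{Z}/v\mathbb{Z})^*$ where $v = \gcd(N/c, c)$ — this gives $\sum_{c\mid N}\varphi(\gcd(c,N/c))$ cosets, the well-known index formula for $[\sl2(\mathbb{Z}):\Gamma_0(N)]$. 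So the plan is: (i) show every matrix $M$ appended in step 4 genuinely lies in $\sl2(\mathbb{Z})$; (ii) show the map sending $M\in A$ to its associated coset invariant is well-defined and injective; (iii) count and conclude surjectivity by matching cardinalities.

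First I would check (i): the determinant of $\begin{pmatrix} a & (ad-1)/c \\ c & d\end{pmatrix}$ is $ad - c\cdot\frac{ad-1}{c} = 1$, so integrality is the only issue, and that reduces to showing $c \mid ad-1$. When $a\neq 0$ this is exactly the congruence $ad\equiv 1\pmod c$ imposed in step 3; when $a = 0$ we need $c\mid 1$, forcing $c=1$, and indeed $v = \gcd(N,1)=1$ so $(\mathbb{Z}/v\mathbb{Z})^*$ contributes only the class of $0$, consistent with the algorithm. (A small point to address: when $a=0$ and $c=1$, step 3 lets $d$ range over $-1\le d < N-1$, but the $c\equiv 0$ condition is vacuous and the matrix $\begin{pmatrix}0 & -1\\ 1 & d\end{pmatrix}$ is visibly in $\sl2(\mathbb{Z})$; one should note this $c=1$ case corresponds to representatives of the $N$ cosets with bottom-left entry $\equiv 1$, which upstairs is the cusp $\infty$-neighborhood data.) Then for (ii) I would compute the bottom row of $M$ modulo the left $\Gamma_0(N)$-action: the invariant is the pair $\big(\gcd(c,N),\, d \bmod \gcd(\gcd(c,N), N/\gcd(c,N))\big)$, and since in the algorithm $c$ already runs over divisors of $N$ and $d$ is taken in a set of size exactly $\varphi(v)$ (the residues coprime to $c$ in range, which biject with $(\mathbb{Z}/v\mathbb{Z})^*$ once one checks the range $-1\le d<\frac{N}{v}-1$ together with the congruence $ad\equiv 1\pmod c$ picks out exactly one representative per class), distinct $M\in A$ give distinct invariants.

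The main obstacle, and the step deserving the most care, is (ii): verifying that for each divisor $c$ of $N$ the pairs $(a,d)$ enumerated in steps 2–3 biject cleanly with the $\varphi(v)$ coset invariants, with no collisions and no omissions. This requires a CRT argument disentangling the condition "$a\in(\mathbb{Z}/v\mathbb{Z})^*$ coprime to $c$" from "$ad\equiv 1\pmod c$ and $-1\le d<\frac{N}{v}-1$": one must check that choosing $a$ modulo $v$ and then $d$ in the stated range with the mod-$c$ congruence determines $d$ modulo $v$ uniquely and hits every unit class, using that $v\mid c$ and $v\mid N/c$. Once this bijection is established, $\#A = \sum_{c\mid N}\varphi(v) = [\sl2(\mathbb{Z}):\Gamma_0(N)]$, and since the coset-invariant map is injective on $A$ and lands in a set of that size, it is a bijection — so $A$ is a complete set of representatives. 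I would close by remarking that the reindexing of $c$ modulo $N$ in step 4 (so $c=N$ yields $M=I$) changes no coset, since $c$ and $c+N$ have the same image in the invariant.
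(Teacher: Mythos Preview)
Your step (i) is fine and matches the paper. The fatal problem is in (ii)--(iii): you have confused cosets with cusps. The quantity $\sum_{c\mid N}\varphi(\gcd(c,N/c))$ is the number of \emph{cusps} of $\Gamma_0(N)$, not the index $[\sl2(\mathbb{Z}):\Gamma_0(N)]=N\prod_{p\mid N}(1+1/p)$; already for $N=p$ prime your sum gives $2$, not $p+1$. Correspondingly, the ``coset invariant'' you propose, namely the pair $\big(\gcd(c,N),\, d \bmod v\big)$, is in fact the \emph{cusp} invariant: many distinct right cosets share the same cusp, so this map is not injective on $\Gamma_0(N)\backslash\sl2(\mathbb{Z})$, and hence cannot be injective on $A$ either. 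Your claim that for fixed $c$ the algorithm produces exactly $\varphi(v)$ matrices is also wrong: step 2 gives $\varphi(v)$ choices of $a$, and for each such $a$ step 3 gives $\frac{N}{vc}$ values of $d$ (the range has $N/v$ integers and the congruence modulo $c$ cuts this down by a factor $c$, since $c\mid N/v$). The correct count is $\sum_{c\mid N}\varphi(v)\cdot\frac{N}{vc}$, which does equal the index, but your argument does not establish this.

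The paper avoids counting altogether. After checking $A\subset\sl2(\mathbb{Z})$ as you do, it proves surjectivity directly: given an arbitrary $M_1\in\sl2(\mathbb{Z})$ with bottom row $(c,d)$, it sets $c'=(N,c)$, chooses $a'$ via step 2, solves a CRT system for $d'$ modulo $N/c'$ and modulo $c'$ to land in the step-3 range, and then checks that the resulting $M_2\in A$ satisfies $M_1M_2^{-1}\in\Gamma_0(N)$ by computing its lower-left entry. Injectivity is proved by a symmetric divisibility argument on bottom rows: if $M_1M_2^{-1}\in\Gamma_0(N)$ then $c_1d_2\equiv c_2d_1\pmod N$ forces $c_1=c_2$, then $d_1\equiv d_2\pmod{N/v}$, and the range constraint in step 3 gives $d_1=d_2$. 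If you want to salvage a counting approach, the right invariant is the bottom row $(c,d)$ viewed in $\mathbb{P}^1(\mathbb{Z}/N\mathbb{Z})$, not modulo $v$.
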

\begin{proof}
Firstly, we prove that all matrices in $A$ are in $SL_2(\mathbb{Z})$. Clearly, all matrices in $A$ have determinant 1, and so it only remains to prove that $\frac{ad-1}{c}$ is integral. When $c=1$, this is immediate. When $c>1$, $ad \equiv 1 \pmod{c}$ and so $\frac{ad-1}{c}$ is integral.

Next we prove that any matrix in $SL_2(\mathbb{Z})$ has a coset representative in $A$. Fix $M_1=\begin{pmatrix} a & b \\ c & d \end{pmatrix} \in SL_2(\mathbb{Z})$. If $N \mid c$ then $M_1$ is in the same coset as $I \in A$. Else, fix $c'=(N,c)$ and $v=\gcd(N/c',c')$. Let $a'$ be the value chosen in step 2 satisfying $a' \equiv \frac{ac}{c'} \pmod{v}$. Define $d'$ by the congruences $d' \equiv \frac{c'd}{c} \pmod{\frac{N}{c'}}$ and $d' \equiv \frac{1}{a'} \pmod{c'}$, with the second congruence only applying if $c'>1$. These conditions are conformal mod $v$ and so define $d'$ mod $\frac{N}{v}$, matching a value from step 3.

We thus have an element $M_2=\begin{pmatrix}
a' & \frac{a'd'-1}{c'} \\ c' & d'
\end{pmatrix} \in A$, and multiplying $M_1$ by $M_2^{-1}$ gives a matrix with lower left entry $c'd-d'c$. As $d' \equiv \frac{c'd}{c} \pmod{\frac{N}{c'}}$, this entry is a multiple of $N$, and so $M_1$ and $M_2$ are representatives of the same coset.

Finally, we prove that no two distinct elements in $A$ represent the same coset. Let $M_1, M_2$ be two elements in $A$ with bottom rows $(c_1,d_1)$ and $(c_2,d_2)$ respectively. Suppose $M_1M_2^{-1} \in \Gamma_0(N)$, then $c_1d_2 \equiv c_2d_1 \pmod{N}$. As $c_1$ and $d_1$ are coprime, and $c_1 \mid N$, we must have that $c_1 \mid c_2$. Applying this argument a second time to $c_2$ and $d_2$ gives $c_1=c_2$, and consequently $d_1 \equiv d_2 \pmod{N/c_1}$. Letting $v=\gcd(\frac{N}{c_1},c_1)$, this fixes the upper left entry $a$ in both matrices. As $d_1a \equiv d_2a \equiv 1 \pmod{c_1}$ we deduce that $d_1 \equiv d_2 \pmod{\frac{N}{v}}$, whereupon the inequality $0 \le d_1, d_2 < \frac{N}{v}$ necessitates that $d_1=d_2$ and so $M_1=M_2$.
\end{proof}

Next, we show that the closure of the fundamental domain constructed from $A$ does not contain $\Gamma_0(N)$-equivalent cusps. 

\begin{lemma}
\label{Adistinctcusps}
No two elements in $A$ map $\infty$ to distinct, $\Gamma_0(N)$-equivalent cusps.
\end{lemma}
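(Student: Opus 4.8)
The plan is to reduce the statement to a purely arithmetic claim about the lower-left columns of matrices in $A$. Recall from Lemma \ref{Arightcoset} that $A$ is a complete set of right coset representatives for $\Gamma_0(N)\backslash\sl2(\mathbb{Z})$, and that each $M \in A$ has bottom row $(c,d)$ with $c \mid N$ and $\gcd(c,d)=1$. Writing the cusp associated with $M$ as $\mathfrak{a}_M = \frac{a}{c}$ (the left column), I would first recall the standard criterion for $\Gamma_0(N)$-equivalence of cusps: two cusps $\frac{a_1}{c_1}$ and $\frac{a_2}{c_2}$ (in lowest terms, with $c_i \mid N$ after adjusting representatives) are $\Gamma_0(N)$-equivalent if and only if $c_1 = c_2 =: c$ and $a_1 \equiv u a_2 \pmod{\gcd(c, N/c)}$ for some $u$ with $\gcd(u, c)=1$ — this is, e.g., Proposition 3.1 in Iwaniec or Shimura's classical computation. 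So the task is to show that for two matrices $M_1, M_2 \in A$ mapping $\infty$ to $\frac{a_1}{c_1}$ and $\frac{a_2}{c_2}$ respectively, if these cusps are $\Gamma_0(N)$-equivalent then $M_1$ and $M_2$ are the same matrix (hence not ``distinct'').

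First I would dispose of the bottom-left entries: by the equivalence criterion, $\Gamma_0(N)$-equivalence forces $c_1 = c_2 =: c$. Then both matrices were produced in the same pass of the algorithm, namely the pass with the fixed divisor $c$ of $N$, so $v := \gcd(N/c, c)$ is the same for both. Now in step 2 the algorithm chooses $a$ ranging over representatives of $(\mathbb{Z}/v\mathbb{Z})^*$ in $\mathbb{Z}_{\ge 0}$ coprime to $c$ — crucially, exactly one representative per residue class mod $v$. The equivalence criterion says $a_1 \equiv u a_2 \pmod{v}$ for some unit $u$ mod $c$; I would need to check that this actually forces $a_1 \equiv a_2 \pmod v$, i.e. that the choice of $a$ mod $v$ is canonical within an orbit. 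This is where the structure of the construction does the work: since $\gcd(v, c)$-type compatibilities built into steps 2–3 pin down $a$ mod $v$, and the set of admissible $a$ values is a transversal, equal residues mod $v$ give literally equal $a_1 = a_2$. Once $a_1 = a_2 =: a$ and $c_1 = c_2 = c$, the argument from the last paragraph of the proof of Lemma \ref{Arightcoset} applies verbatim: $d$ is determined mod $\frac{N}{v}$ by the congruences $ad \equiv 1 \pmod c$ together with the range constraint $-1 \le d < \frac{N}{v} - 1$ from step 3, so $d_1 = d_2$, and then the upper-right entry $\frac{ad-1}{c}$ is also equal, giving $M_1 = M_2$.

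I expect the main obstacle to be the middle step: verifying that $\Gamma_0(N)$-equivalence of the two cusps, which a priori only gives $a_1 \equiv u a_2 \pmod v$ for \emph{some} unit $u$ modulo $c$, is in fact compatible with — and indeed forces — the single canonical choice of representative made in step 2 of the algorithm. The subtlety is that the algorithm's representative for $a$ is chosen in $(\mathbb{Z}/v\mathbb{Z})^*$, not modulo the larger group, so one must show the ``$u$-ambiguity'' does not actually widen the orbit: the correct statement of the cusp-equivalence criterion, stated carefully, gives $a_1 \equiv a_2$ modulo $\gcd(c, N/c) = v$ up to a unit that can be absorbed, and one should double-check this matches exactly the transversal used. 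Everything after that is the bookkeeping already carried out in Lemma \ref{Arightcoset}. A clean way to organise the writeup is: (1) recall the classical cusp-equivalence criterion; (2) deduce $c_1 = c_2$; (3) deduce $a_1 = a_2$ from the transversal property; (4) cite the tail of Lemma \ref{Arightcoset}'s proof for $d_1 = d_2$ and conclude.
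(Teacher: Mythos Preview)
Your reading of the lemma is off: ``distinct'' modifies \emph{cusps}, not \emph{elements}. The claim is only that if two matrices in $A$ send $\infty$ to $\Gamma_0(N)$-equivalent cusps then those cusps are literally equal; it does \emph{not} assert that the matrices themselves coincide. In fact they need not: for fixed $(a,c)$ the range in step~3 has length $N/v$ and the constraint $ad\equiv 1\pmod c$ fixes $d$ only modulo $c$, so there are $N/(cv)$ distinct values of $d$ --- and hence $N/(cv)$ distinct matrices in $A$ --- all with left column $(a;c)$ and hence the same cusp. Your step~(4) therefore aims at a false target, and the appeal to the tail of Lemma~\ref{Arightcoset} is inapposite anyway: that argument used the hypothesis $M_1M_2^{-1}\in\Gamma_0(N)$, whereas cusp equivalence only gives $M_1T^nM_2^{-1}\in\Gamma_0(N)$ for some $n$.

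That said, your steps (1)--(3) already deliver the actual lemma once you reach $c_1=c_2$ and $a_1=a_2$, so the overreach is repaired simply by stopping there. On the ``$u$-ambiguity'' you flag as the main obstacle: you never resolve it, and whether it is genuine depends on exactly which form of the classical criterion you quote. The paper sidesteps the issue entirely by not citing a black-box criterion at all. Instead it writes out $M_1T^nM_2^{-1}$ explicitly, reads off from the lower-left entry first that $c_1=c_2=:c$ and then that $d_1\equiv d_2\pmod v$, and finally uses the congruence $a_id_i\equiv 1\pmod c$ built into step~3 of the algorithm to conclude $a_1\equiv a_2\pmod v$, whence $a_1=a_2$ by the transversal property of step~2. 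Your route is shorter if one accepts the external reference; the paper's is self-contained and makes explicit how the algorithm's congruence on $d$ is what pins down $a$ modulo $v$.
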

\begin{proof}
Fix two elements in $A$:
\begin{equation}
M_1 = \begin{pmatrix}
a_1 & b_1 \\ c_1 & d_1
\end{pmatrix},\;\;\;\; M_2 = \begin{pmatrix}
a_2 & b_2 \\ c_2 & d_2
\end{pmatrix}.
\end{equation}
Note that these map $\infty$ to $\frac{a_1}{c_1}$ and $\frac{a_2}{c_2}$ respectively. Assume that these cusps are $\Gamma_0(N)$-equivalent. Then there exists an $n \in \mathbb{Z}$ such that:
\begin{equation}
\begin{pmatrix}
a_1 & b_1 \\ c_1 & d_1
\end{pmatrix}\cdot \begin{pmatrix} 1 & n \\ 0 & 1 \end{pmatrix} \cdot \begin{pmatrix}
a_2 & b_2 \\ c_2 & d_2
\end{pmatrix}^{-1} \in \Gamma_0(N).
\end{equation}
Evaluating the lower left entry of this matrix, we see that $N \mid c_1d_2 - c_2d_1 - nc_1c_2$. As $c_1$ and $d_1$ are coprime, and $c_1 \mid N$, we must have that $c_1 \mid c_2$. Applying this argument a second time to $c_2$ and $d_2$ gives $c_1=c_2$. Consequently, we deduce $c_1d_2 \equiv c_1d_1 \pmod{(N,c_1^2)}$.

Dividing this through by $c_1$ gives $d_1 \equiv d_2 \pmod{v}$, where $v=(c_1,N/c_1)$. From step 3 of the algorithm, we see that $a_1 \equiv a_2 \pmod{v}$, but as we only have a single representative of each class in $(\mathbb{Z}/v\mathbb{Z})^*$ we conclude $a_1=a_2$.
\end{proof}

We now move to constructing a function in the span of Maass cusp forms. Assume a given $N, \chi, \lambda$ and full set of $f_\mathfrak{a}$ meeting Conditions \ref{conditions1} and let $\tilde{f}$ be as in (\ref{tildedef}). We construct a function $\tilde{f}_S \in L^2(N,\chi)$ by convolution with a point-pair invariant, as outlined in \cite[Section 1.8]{IwaniecMethods}. For any function $k: \mathbb{H} \times \mathbb{H} \rightarrow \mathbb{C}$ we define $f\star k$ as the convolution:
\begin{equation}
f \star k (z) = \int_{\mathbb{H}} k(z,w)f(w) d\mu(w).
\end{equation}
Fix some $\delta \in \mathbb{R}_{>0}$, and let $k$ be an $L^2$ function, dependant only on the hyperbolic distance between $z$ and $w$, which is 0 whenever this distance is greater than $\delta$. Defining $\tilde{f}_S = \tilde{f} \star k$, we see that $\tilde{f}_S \in L^2(N,\chi)$ and so by (\ref{eqdecomp}) we obtain a function in the span of Maass cusp forms by constructing an operator which annihilates $\mathbb{C}$ and $\mathcal{E}(N,\chi)$.

\begin{definition}[Diamond operator]
Fix a level $N$, Dirichlet character $\chi$, and $m \in \mathbb{Z}_{>1}$ such that $(m,N)=1$. Let $\Psi$ be as in Theorem \ref{maintheorem}. We define the operator:
\begin{equation}
\diamondsuit := \prod_{\psi \in \Psi} \left(T_m - \sum_{ab=m} \psi(a/b)\chi(b)(b/a)^{\sqrt{\frac{1}{4}-\Delta}}\right).
\end{equation}
\end{definition}
This is a generalisation of the $\aleph$ operator introduced in \cite{Venkatesh} for the purpose of annihilating Eisenstein series and constants for level 1.
\begin{lemma}
For any $f \in L^2(N,\chi)$, $\diamondsuit (f) \in \mathcal{S}(N,\chi)$.
\end{lemma}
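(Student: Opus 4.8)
The plan is to use the orthogonal decomposition (\ref{eqdecomp}) of $L^2(N,\chi)$ and to show that $\diamondsuit$ annihilates the two summands $\mathbb{C}$ and $\mathcal{E}(N,\chi)$, so that its image lies in $\mathcal{S}(N,\chi)$. The preliminary observation is that $\diamondsuit$ is built from the two commuting operators $T_m$ and $\Delta$: the factor $(b/a)^{\sqrt{1/4-\Delta}}$ is to be read as $\exp\bigl((\log(b/a))\sqrt{1/4-\Delta}\bigr)$, with $\sqrt{1/4-\Delta}$ and its exponential defined through the spectral calculus of the self-adjoint operator $\Delta$; on $\mathcal{E}(N,\chi)$, where the spectrum of $\Delta$ lies in $[\tfrac14,\infty)$, this is a unitary, and on $\mathbb{C}$ the operator $\sqrt{1/4-\Delta}$ reduces to the scalar $\tfrac12$. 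Since $T_m$ commutes with $\Delta$, each of the three subspaces in (\ref{eqdecomp}) is invariant under $T_m$ and under the functional calculus of $\Delta$, hence under $\diamondsuit$; so it suffices to treat $\mathbb{C}$ and $\mathcal{E}(N,\chi)$.

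For the constant summand, which is present only when $\chi=\mathbbm{1}$, I would argue by direct computation. The constant function $c$ has $\Delta c=0$, so $\sqrt{1/4-\Delta}$ acts on it as $\tfrac12$, and $T_m c=\bigl(m^{-1/2}\sum_{ad=m}\chi(a)d\bigr)c=\bigl(m^{-1/2}\sum_{d\mid m}d\bigr)c$. The factor of $\diamondsuit$ attached to the trivial character contributes $\bigl(m^{-1/2}\sum_{ab=m}(b/a)^{1/2}\bigr)c$ on $c$; since $ab=m$ forces $(b/a)^{1/2}=b\,m^{-1/2}$, this coefficient is $m^{-1/2}\sum_{b\mid m}b$, matching that of $T_m c$. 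Thus this factor, which is one of those composing the product $\diamondsuit$, already sends $c$ to $0$.

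The substantive case is $\mathcal{E}(N,\chi)$, for which I would invoke the explicit theory of the Eisenstein spectrum for $\Gamma_0(N)$ with character $\chi$. As a Hecke module this space is spanned, continuously in the spectral parameter, by Eisenstein series $E_{\psi,\chi\overline{\psi}}(z,\tfrac12+it)$ and their oldform copies, indexed by the characters $\psi$ with $\mathfrak{f}(\psi)\mathfrak{f}(\chi\overline{\psi})\mid N$; for $(m,N)=1$ each of these is a $T_m$-eigenfunction with eigenvalue $\sum_{ab=m}\psi(a)(\chi\overline{\psi})(b)(b/a)^{it}$ — obtained from the Hecke recursion (\ref{heckerelation}) and the known first Fourier coefficient of an Eisenstein series — and a $\Delta$-eigenfunction with eigenvalue $\tfrac14+t^2$. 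Decomposing an arbitrary $f\in\mathcal{E}(N,\chi)$ along this basis and the spectral integral in $t$, it is enough to show that $\diamondsuit$ kills each such eigenfunction. On it, $\sqrt{1/4-\Delta}$ acts as $it$ (with the branch fixed so as to reproduce the exponent $(a/b)^{ir}$ appearing in Theorem \ref{maintheorem}), so the factor of $\diamondsuit$ indexed by $\psi'\in\Psi$ acts by the scalar $\sum_{ab=m}\psi(a)(\chi\overline{\psi})(b)(b/a)^{it}-\sum_{ab=m}\psi'(a)(\chi\overline{\psi'})(b)(b/a)^{it}$. By maximality of $\Psi$ there is a $\psi'\in\Psi$ with $\psi'(m)=\psi(m)$, and verifying that the two sums then coincide makes this factor, and hence $\diamondsuit$, vanish on the eigenfunction.

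I expect this last verification to be the main obstacle. It amounts to requiring that the two sums above, viewed as functions of $t$ — linear combinations of the independent exponentials $(b/a)^{it}$ over the factorisations $ab=m$ — have equal coefficients, whereas the relation $\psi'(m)=\psi(m)$ supplied by the maximality of $\Psi$ only forces the two extreme coefficients to agree directly; reconciling the remaining ones is where the Hecke recursion and the specific shape of $m$ genuinely enter. A further technical point is to confirm that the Eisenstein series attached to the geometric cusps of the fundamental domain built from $A$ lie in the span of the Hecke-eigen Eisenstein series above, so that checking $\diamondsuit$ on eigenfunctions really does settle it on all of $\mathcal{E}(N,\chi)$; the remaining steps — commutation with the decomposition and the constant case — are routine.
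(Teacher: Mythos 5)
Your route is the same as the paper's: it applies the decomposition (\ref{eqdecomp}), quotes \cite{Young} for the fact that $\mathcal{E}(N,\chi)$ is spanned by Eisenstein series $E_\psi$ with $\Delta E_\psi=(\tfrac14+r^2)E_\psi$ and $T_mE_\psi=\bigl(\sum_{ab=m}\psi(a/b)\chi(b)(b/a)^{ir}\bigr)E_\psi$ (your $\sum_{ab=m}\psi(a)(\chi\overline{\psi})(b)(b/a)^{it}$ is the same quantity), and disposes of $\mathbb{C}$ by noting that the $\psi=\mathbbm{1}$ factor acts on constants through $T_m$ and through the subtracted operator by the same scalar $\sigma(m)/\sqrt{m}$ --- exactly your computation. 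So the only substantive difference is the step you flag as the main obstacle, and there the paper is in fact silent: it simply asserts ``$\diamondsuit$ sends any such series to $0$''.

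To close that step along your lines: group the terms by the ratio $b/a$ (distinct factorisations of $m$ give distinct ratios), and use linear independence in $r$ of the exponentials $(b/a)^{ir}$ to reduce the equality of the two sums to the termwise condition $(\psi\overline{\psi'})(a/b)=1$ for every $ab=m$. When $m$ is prime the only ratios are $m^{\pm1}$, so this is precisely $\psi'(m)=\psi(m)$, which maximality of $\Psi$ provides; the same holds for $m=p^2$, since then every coefficient is determined by $\psi(m)$. That settles the situation actually used in the paper (the application takes $m=2$). For general $m$, however, your suspicion is correct: if $m$ has two distinct prime factors, or $m=p^k$ with $k\ge 3$, the intermediate coefficients involve $\psi(a)/\psi(b)$ on proper divisors and are not determined by $\psi(m)$, so $(\psi\overline{\psi'})(m)=1$ does not force the sums to agree, and one can choose $\psi$, $\psi'$ and a maximal $\Psi$ for which the corresponding factor fails to annihilate $E_\psi$. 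So the verification you left open is not mere bookkeeping: it goes through only under a restriction on $m$ (or with $\Psi$ chosen so that its subtracted scalars realise every Eisenstein $T_m$-eigenvalue), a point the paper's own proof does not address; the same caveat applies to the constant case, where both you and the paper tacitly take $\mathbbm{1}\in\Psi$. Your remaining concern --- that the Eisenstein series attached to the cusps really lie in the span of the Hecke-eigen series $E_\psi$ --- is exactly what the citation of \cite{Young} is invoked to supply, so nothing further is needed there.
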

\begin{proof}
The component functions $T_m$ and $\Delta$ stabilise $L^2(N,\chi)$, and thus $\diamondsuit(f)\in L^2(N,\chi)$. The continuous spectrum is, by \cite{Young}, spanned by Eisenstein series attached to characters $\psi$, having:
\begin{align}
\Delta E_\psi =& \left(\frac{1}{4}+r^2\right)E_\psi, \\
T_m E_\psi =& \left(\sum_{ab=m}\psi(a/b)\chi(b)(b/a)^{ir}\right)E_\psi.\\
\end{align}
As $\diamondsuit$ sends any such series to 0, it annihilates $\mathcal{E}(N,\chi)$. 

If $\chi=\mathbbm{1}$, consider the factor when $\psi=\mathbbm{1}$ acting on constants. Both $T_m$ and $\sum (b/a)^{\sqrt{\frac{1}{4}-\Delta}}$ multiply any constant by $\frac{\sigma(m)}{\sqrt{m}}$, and as such $\diamondsuit$ annihilates $\mathbb{C}$. The result then follows from the spectral decomposition in (\ref{eqdecomp}).
\end{proof}

Defining $g := \diamondsuit(\tilde{f}_S)$, we see that $g$ is in the span of Maass cusp forms of level $N$ and character $\chi$. It remains to deduce a bound between $\lambda$ and the $\Delta$-eigenvalue of a form contributing to $g$.

\begin{lemma}
\label{prebound}
There exists a Maass cusp form with Laplace eigenvalue $\tilde{\lambda}$ satisfying:
\begin{equation}
|\tilde{\lambda}-\lambda| \le \frac{||(\Delta-\lambda)g||_2}{||g||_2}.
\end{equation}
\end{lemma}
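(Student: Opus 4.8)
The plan is to use the spectral decomposition of $g$ into an orthogonal sum of Maass cusp forms and apply a standard "quasimode" argument. Since $g \in \mathcal{S}(N,\chi)$, we may write $g = \sum_j c_j u_j$ where $\{u_j\}$ is an orthonormal basis of Hecke--Maass cusp forms with $\Delta u_j = \lambda_j u_j$, and the sum converges in $L^2(N,\chi)$. By Parseval, $\|g\|_2^2 = \sum_j |c_j|^2$ and, since $\Delta - \lambda$ acts diagonally in this basis, $\|(\Delta-\lambda)g\|_2^2 = \sum_j |\lambda_j - \lambda|^2 |c_j|^2$.

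The key step is the observation that if $|\lambda_j - \lambda| > R$ for \emph{every} $j$ appearing in the decomposition (i.e. every $j$ with $c_j \neq 0$), then
\begin{equation}
\|(\Delta-\lambda)g\|_2^2 = \sum_j |\lambda_j-\lambda|^2 |c_j|^2 > R^2 \sum_j |c_j|^2 = R^2 \|g\|_2^2,
\end{equation}
so $R < \|(\Delta-\lambda)g\|_2 / \|g\|_2$. Taking the contrapositive: setting $R = \|(\Delta-\lambda)g\|_2/\|g\|_2$, there must exist some $j$ with $|\lambda_j - \lambda| \le R$, and $u_j$ is the desired Maass cusp form with eigenvalue $\tilde\lambda := \lambda_j$. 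This of course requires $\|g\|_2 > 0$; if $g = 0$ the stated bound is vacuous (division by zero), so implicitly one assumes $g \neq 0$, which will be verified downstream when the denominator is shown to be positive.

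One point needing care is the convergence and the legitimacy of applying $\Delta - \lambda$ term by term: $\Delta$ is unbounded, so a priori $g$ need not lie in its domain. However, $g = \diamondsuit(\tilde f_S)$ and $\tilde f_S = \tilde f \star k$ is smooth and rapidly decaying (being a convolution of a bounded automorphic function against a compactly supported point-pair invariant), hence $g$ is smooth, bounded, and of rapid decay at the cusps, so $g$ lies in the domain of every power of $\Delta$ and the spectral sum for $(\Delta-\lambda)g$ converges in $L^2$. Thus the term-by-term manipulation is justified, and the main obstacle is really just this routine but necessary check of regularity and decay of $g$; the inequality itself is the elementary Parseval argument above.
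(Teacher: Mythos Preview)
Your proof is correct and follows essentially the same approach as the paper: both expand $g$ in an orthonormal basis of cuspidal eigenforms and use Parseval to compare $\|(\Delta-\lambda)g\|_2^2=\sum_j|\lambda_j-\lambda|^2|c_j|^2$ with $\|g\|_2^2=\sum_j|c_j|^2$. The paper phrases the conclusion via a spectral projection $Pr_H(g)$ onto the eigenspaces with $|\lambda_j-\lambda|^2\le H$ and shows it is nonzero for $H>\|(\Delta-\lambda)g\|_2^2/\|g\|_2^2$, whereas you argue the contrapositive directly; these are the same argument, and your added remarks on regularity of $g$ and the $g\neq 0$ caveat are welcome clarifications the paper leaves implicit.
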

\begin{proof}
Let $f_j$ be a basis of eigenforms for the cuspidal spectrum with Laplace eigenvalues $\lambda_j$, and let $\epsilon_j$ be such that $g = \sum_{j=1}^\infty \epsilon_j f_j$. For any $H \in \mathbb{R}_{\ge 0}$ let $P$ be the subset of indices of Maass forms satisfying $|\lambda_j - \lambda|^2 \le H$, and define the projection $Pr_H(g)=\sum_{j \in P}\epsilon_j f_j$. Note that:
\begin{equation}
\sum_{j \in \mathbb{N}-P} | \epsilon_j |^2 \le \sum_{j=1}^\infty |\epsilon_j|^2 \frac{|\lambda_j-\lambda|^2}{H},
\end{equation}
and so:
\begin{equation}
||Pr_H(g)||_2^2 \ge ||g||_2^2 - \sum_{j=1}^\infty |\epsilon_j|^2 \frac{|\lambda_j-\lambda|^2}{H}.
\end{equation}
Thus, by setting:
\begin{equation}
H > \frac{||(\Delta-\lambda)g||_2^2}{||g||_2^2},
\end{equation}
we see that $Pr_H(g)$ is non-zero. This means that $P$ is non-empty, and so there exists a Maass form with Laplace eigenvalue $\tilde{\lambda}$ satisfying $|\tilde{\lambda}-\lambda|^2 \le H$.
\end{proof}

Theorem \ref{maintheorem} follows from finding bounds on the numerator and denominator in this lemma in terms of our original $f_\mathfrak{a}$ functions. First we find a lower bound for $||g||_2$. This is achieved by identifying a large portion of the fundamental domain on which $g$ can be written explicitly in terms of $f$. We start by determining which Fourier expansion defines $\tilde{f}(z)$ for a given $z$.

\begin{lemma}
\label{imaggivesdef}
Fix a level $N$ and divisor $d \mid N$ such that $(d, N/d)=1$. Let $\mathfrak{a}$ be the cusp $\frac{1}{d}$ with cusp-normalising map $\sigma$ and width $h$. For any $z \in \mathbb{H}$ with $\Im(\sigma^{-1} z)>\frac{1}{h}$, we have $\tilde{f}(z) = f_\mathfrak{a}(\sigma^{-1}z)$. If $N=2$ then $\tilde{f}(z)=f_0(Sz/2)$ whenever $\Im(Sz)>1$, and $\tilde{f}(z)=f_\infty(z)$ whenever $\Im(z)>1$. If $N\ge 3$ then $\tilde{f}(z)=f_0(\sigma^{-1}z)$ whenever $\Im(Sz)>\sqrt{3}$, and $\tilde{f}(z)=f_\infty(z)$ whenever $\Im(z)>\frac{1}{\sqrt{3}}$.
\end{lemma}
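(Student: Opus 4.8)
The plan is to make the pullback in (\ref{tildedef}) completely explicit. Given $z$ in the prescribed horoball around a cusp $\mathfrak{a}$, I would locate which copy $\mathcal{F}_M$ receives the $\Gamma_0(N)$-pullback of $z$, read off which clause of (\ref{tildedef}) is then in force, and finally cancel the cusp parameter $\mu_\mathfrak{a}$ against the quasi-periodicity of the truncated Fourier series $f_\mathfrak{a}$ in (\ref{FourierExpansionForm}). The one non-computational ingredient is Lemma \ref{Adistinctcusps}: every matrix in $A$ whose associated cusp is $\Gamma_0(N)$-equivalent to $\mathfrak{a}$ in fact has associated cusp $\mathfrak{a}$.

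First consider a cusp $\mathfrak{a}=1/d$ of the fundamental domain with $\mathfrak{a}\notin\{0,\infty\}$, so that $1<d<N$. Let $M_\mathfrak{a}\in A$ be the coset representative with associated cusp $\mathfrak{a}$ and take the cusp-normalising map in the form $\sigma=\sigma_\mathfrak{a}=M_\mathfrak{a}\begin{pmatrix}\sqrt h&0\\0&\sqrt h^{-1}\end{pmatrix}$; then $M_\mathfrak{a}^{-1}z=h\,\sigma^{-1}z$ and $\sigma T\sigma^{-1}=M_\mathfrak{a}T^hM_\mathfrak{a}^{-1}$ generates the stabiliser of $\mathfrak{a}$, so the hypothesis $\Im(\sigma^{-1}z)>1/h$ is exactly $\Im(M_\mathfrak{a}^{-1}z)>1$. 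For such $z$ away from a null set --- harmless, since $\tilde f$ enters Theorem \ref{maintheorem} only through essential suprema --- a unique integer translate $T^{-m_0}$ carries $M_\mathfrak{a}^{-1}z$ into $\overline{\mathcal F}$. Choosing $\rho\in\Gamma_0(N)$ so that $M':=\rho M_\mathfrak{a}T^{-m_0}$ lies in $A$, we get $\rho z\in\mathcal F_{M'}$; by Lemma \ref{Adistinctcusps} the cusp of $M'$ is $\mathfrak{a}$, hence $M_\mathfrak{a}^{-1}M'=T^s$ for some $s\in\mathbb Z$, and so $\rho=M_\mathfrak{a}T^{s+m_0}M_\mathfrak{a}^{-1}$; comparing its lower-left entry against the congruence condition defining $\Gamma_0(N)$ forces $h\mid s+m_0$, i.e.\ $\rho=(\sigma T\sigma^{-1})^k$ with $k=(s+m_0)/h$. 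Since $\mathfrak{a}\neq0$ the ``else'' clause of (\ref{tildedef}) applies, giving $\tilde f(z)=\overline{\chi(\rho)}\,f_\mathfrak{a}(\sigma^{-1}\rho z)=\overline{\chi(\rho)}\,f_\mathfrak{a}(\sigma^{-1}z+k)$. Multiplicativity of $\chi$ on $\Gamma_0(N)$ gives $\chi(\rho)=\chi(\sigma T\sigma^{-1})^k=e(k\mu_\mathfrak{a})$, while term-by-term inspection of (\ref{FourierExpansionForm}) gives $f_\mathfrak{a}(w+k)=e(k\mu_\mathfrak{a})f_\mathfrak{a}(w)$; the factors cancel and $\tilde f(z)=f_\mathfrak{a}(\sigma^{-1}z)$.

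For $\mathfrak{a}=0$ and $\mathfrak{a}=\infty$ the same computation works (and $\mu_0=\mu_\infty=0$ as $\chi$ is even), but one must track which clause of (\ref{tildedef}) is active. For $\mathfrak{a}=0$, with $\sigma_0=S\begin{pmatrix}\sqrt N&0\\0&\sqrt N^{-1}\end{pmatrix}$ and $\sigma_0^{-1}z=Sz/N$, the pullback is $\rho z=\sigma_0(\sigma_0^{-1}z+k)=-1/(N(\sigma_0^{-1}z+k))$, and since $N(\sigma_0^{-1}z+k)$ has imaginary part $\Im(Sz)$ we get $|\rho z|\le1/\Im(Sz)$. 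Hence $\Im(Sz)>\sqrt3$ forces $|\rho z|<1/\sqrt3$, the ``else'' clause applies, and $\tilde f(z)=f_0(\sigma_0^{-1}z)$; when $N<3$ there is no special clause and $\Im(Sz)>1$ (i.e.\ $\Im(\sigma_0^{-1}z)>1/N$) already suffices. For $\mathfrak{a}=\infty$ we have $\sigma_\infty=I$. If $\Im z\ge1$, an integer translate of $z$ lies in $\overline{\mathcal F}=\mathcal F_I$ and $\tilde f(z)=f_\infty(z)$ at once, which settles $N=2$. If $N\ge3$ and $1/\sqrt3<\Im z<1$, then the pullback can be taken as a translation $T^n$ with $|\Re(T^nz)|\le1/2$, and $T^nz$ then lies in one of the four copies $\overline{\mathcal F},\,S\overline{\mathcal F},\,ST\overline{\mathcal F},\,ST^{-1}\overline{\mathcal F}$, since these cover the strip $\{\,|\Re z|\le1/2,\ \Im z\ge1/\sqrt3\,\}$; as $|\rho z|=|T^nz|\ge\Im z>1/\sqrt3$, either the ``else'' clause at cusp $\infty$ or the special clause at cusp $0$ fires, and both return $f_\infty(z)$ (using $\mu_\infty=0$).

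I expect the step needing the most care to be the last geometric assertion for $N\ge3$: that the four distinguished copies $\overline{\mathcal F},\,S\overline{\mathcal F},\,ST\overline{\mathcal F},\,ST^{-1}\overline{\mathcal F}$ cover $\{\,|\Re z|\le1/2,\ \Im z\ge1/\sqrt3\,\}$ --- which is exactly what forces the thresholds $\sqrt3$ and $1/\sqrt3$ and makes the case split in (\ref{tildedef}) come out right --- together with the companion estimate $|\rho z|\le1/\Im(Sz)$ at cusp $0$. I would settle the covering by transporting the inequalities $|z|\ge1$, $|\Re z|\le1/2$ defining $\overline{\mathcal F}$ through $S$, $ST$, $ST^{-1}$ and comparing with the strip. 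Everything else --- the coset bookkeeping, the identification of $\rho$ as a power of $\sigma T\sigma^{-1}$, and the $e(k\mu_\mathfrak{a})$ cancellation --- is routine.
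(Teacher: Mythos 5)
Your proof is correct and follows essentially the same route as the paper's: write the pullback as a power of $\sigma T\sigma^{-1}$ conjugated back, then cancel $\chi(\rho)$ against the quasi-periodicity $f_{\mathfrak{a}}(w+k)=e(k\mu_{\mathfrak{a}})f_{\mathfrak{a}}(w)$ coming from (\ref{FourierExpansionForm}). The additional care you take at the cusps $0$ and $\infty$ --- the estimate $|\rho z|\le 1/\Im(Sz)$ and the covering of the strip $\{|\Re z|\le \tfrac12,\ \Im z\ge \tfrac{1}{\sqrt{3}}\}$ by $\mathcal{F}_I,\mathcal{F}_S,\mathcal{F}_{ST},\mathcal{F}_{ST^{-1}}$ --- correctly fills in the clause-selection details that the paper dismisses as clear, and in particular accounts for the thresholds $\sqrt{3}$ and $\tfrac{1}{\sqrt{3}}$.
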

\begin{proof}
First we prove the statement on the cusp $\mathfrak{a}=\frac{1}{d}$. Let $B$ be the subset of $A$ consisting of matrices associated with $\mathfrak{a}$. Fix $M=\begin{pmatrix} 1 & 0 \\ d & 1 \end{pmatrix} \in B$, and note that $B = \left\{ MT^r : 0 \le r < h\right\}$. If $\Im(\sigma^{-1} z)>\frac{1}{h}$ then there exists some $n$ such that $T^nM^{-1}z \in \mathcal{F}_I$, and consequently there exists some $h \mid m$ such that $MT^mM^{-1}$ is the pullback matrix of $z$. Noting that $\chi(MT^mM^{-1})=1$, we conclude that $\tilde{f}(z)=f_{\mathfrak{a}}(\sigma^{-1}z)$.

The statements on $f_\infty$ are clear from the definition of $\tilde{f}$, and the statements on $f_0$ follow from the above, taking $M=ST^{-1}$.
\end{proof}
Using this, we show that for certain $z$, all evaluations of $\tilde{f}$ in the definition of a Hecke operator are evaluations of the same cusp's Fourier expansion. Let $|\hat{k}|$ be the eigenvalue of convolving $k$ with functions having $\Delta$-eigenvalue $\lambda$, per \cite[Theorem 1.14]{IwaniecMethods}.

\begin{lemma}
\label{HeckeAction}
Fix a level $N \ge 3$, a matrix $M= \begin{pmatrix}
0 & -1 \\ 1 & n
\end{pmatrix} \in A$ sending $\infty$ to 0, and a prime $p \nmid N$. Let $z$ be any complex number with $\Im(z) > \sqrt{3}pe^\delta$. Then:
\begin{equation}
T_p\tilde{f}_S(Mz) = |\hat{k}|\chi(p)T_p^{\overline{\chi}} f_0\left( \frac{z+n}{N}\right)
\end{equation}
\end{lemma}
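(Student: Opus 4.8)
The strategy is to peel the convolution off $\tilde f_S$, reduce to a pointwise identity for $T_p\tilde f$ on the $\delta$-ball about $Mz$, and then reinstate the convolution.

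\emph{Reduction to a local identity.} Throughout, $T_p$ denotes the character-$\chi$ operator $T_p^\chi$. Since $k$ depends only on hyperbolic distance, which is $\gl2(\mathbb{R})^+$-invariant, the operator $f\mapsto f\star k$ commutes with precomposition by every M\"obius transformation, and hence with the dilations and translations comprising $T_p$; thus $T_p\tilde f_S=(T_p\tilde f)\star k$. As $k$ vanishes outside the $\delta$-ball, the value $(T_p\tilde f_S)(Mz)$ depends only on $T_p\tilde f$ restricted to $B:=\{w:d(w,Mz)\le\delta\}$. I will show that on $B$
\[
(T_p\tilde f)(w)=\chi(p)\,(T_p^{\overline\chi}f_0)(\sigma^{-1}w),
\]
where $\sigma$ is the cusp-normalising map at the cusp $0$ associated with $M$; with the standard choice $\sigma=S\begin{pmatrix}\sqrt{N}&0\\0&1/\sqrt{N}\end{pmatrix}$ one has $\sigma^{-1}v=-1/(Nv)$, and in particular $\sigma^{-1}(Mz)=(z+n)/N$. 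Granting this: by Conditions \ref{conditions1}, $f_0$ is a finite $\mathbb{C}$-combination of Whittaker terms of the form in (\ref{FourierExpansionForm}), each a $\Delta$-eigenfunction of eigenvalue $\lambda$, so $(T_p^{\overline\chi}f_0)\circ\sigma^{-1}$ is a $\Delta$-eigenfunction of eigenvalue $\lambda$ on $\mathbb{H}$, and convolving it with $k$ multiplies it by $|\hat k|$ by \cite[Theorem 1.14]{IwaniecMethods}. Hence $(T_p\tilde f_S)(Mz)=|\hat k|\,\chi(p)\,(T_p^{\overline\chi}f_0)((z+n)/N)$, which is the assertion.

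\emph{Conjugating the Hecke matrices.} Expand $(T_p\tilde f)(w)=\tfrac1{\sqrt{p}}\bigl(\tilde f(w/p)+\sum_{b=1}^{p-1}\tilde f(\tfrac{w+b}{p})+\chi(p)\tilde f(pw)\bigr)$ and put $u=\sigma^{-1}w$. Because $\sigma$ is $S$ times a diagonal matrix, conjugation by $\sigma$ interchanges the two triangular det-$p$ matrices: $\sigma^{-1}\begin{pmatrix}1&0\\0&p\end{pmatrix}\sigma=\begin{pmatrix}p&0\\0&1\end{pmatrix}$ and $\sigma^{-1}\begin{pmatrix}p&0\\0&1\end{pmatrix}\sigma=\begin{pmatrix}1&0\\0&p\end{pmatrix}$, so these send $u$ to $pu$ and $u/p$ respectively. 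For the middle terms, $(p,N)=1$ forces the cusp $b/p$ to be $\Gamma_0(N)$-equivalent to $0$ — it is the only cusp in $A$ with denominator coprime to $N$ — so one can choose $\gamma_b=\begin{pmatrix}\alpha_b&b\\Nk_b&p\end{pmatrix}\in\Gamma_0(N)$ with $\gamma_b(0)=b/p$, equivalently $\alpha_bp-Nk_bb=1$. A short computation using this relation (and the cancellation $N(bp-pb)=0$) gives $\sigma^{-1}\gamma_b^{-1}\begin{pmatrix}1&b\\0&p\end{pmatrix}\sigma=\begin{pmatrix}1&k_b\\0&p\end{pmatrix}$, which sends $u$ to $\tfrac{u+k_b}{p}$; also $\chi(\gamma_b)=\chi(p)$.

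\emph{The local identity.} For $w\in B$, since $\sigma^{-1}$ is an isometry and points at hyperbolic distance $\le\delta$ have imaginary parts within a factor $e^{\pm\delta}$, we get $\Im(u)\ge e^{-\delta}\Im(\sigma^{-1}(Mz))=e^{-\delta}\Im(z)/N>\sqrt{3}\,p/N$, using $\Im(z)>\sqrt{3}\,pe^\delta$. Consequently $pu$, $u/p$ and $\tfrac{u+k_b}{p}$ — the $\sigma^{-1}$-coordinates of $w/p$, $pw$ and $\gamma_b^{-1}\tfrac{w+b}{p}$ — all have imaginary part $>\sqrt{3}/N$, which is exactly the cusp-$0$ threshold of Lemma \ref{imaggivesdef} (there $\tilde f=f_0\circ\sigma^{-1}$, and $\Im(\sigma^{-1}v)>\sqrt{3}/N$ iff $\Im(Sv)>\sqrt{3}$). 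Hence directly $\tilde f(w/p)=f_0(pu)$ and $\tilde f(pw)=f_0(u/p)$, while $\tilde f(\tfrac{w+b}{p})=\chi(\gamma_b)\,\tilde f(\gamma_b^{-1}\tfrac{w+b}{p})=\chi(p)f_0(\tfrac{u+k_b}{p})$, using that $\tilde f$ is $\Gamma_0(N)$-automorphic with character $\chi$ off the measure-zero set where it is ill-defined — the pullback of $\gamma v$ being (pullback of $v$)$\,\gamma^{-1}$. Since $\sigma T\sigma^{-1}=\begin{pmatrix}1&0\\-N&1\end{pmatrix}$, the cusp parameter is $\mu_0=0$, so $f_0$ is genuinely $1$-periodic in $x$ and $f_0(\tfrac{u+k_b}{p})$ depends only on $k_b\bmod p$; and as $b$ runs over $(\mathbb{Z}/p\mathbb{Z})^\times$, $k_b\equiv-(Nb)^{-1}\pmod p$ runs over it bijectively. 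Assembling the pieces,
\[
(T_p\tilde f)(w)=\frac1{\sqrt{p}}\Bigl(f_0(pu)+\chi(p)\sum_{b=0}^{p-1}f_0\bigl(\tfrac{u+b}{p}\bigr)\Bigr)=\chi(p)\,(T_p^{\overline\chi}f_0)(u),
\]
using $\chi(p)\overline{\chi(p)}=1$, valid since $p\nmid N$. This is the local identity, completing the proof.

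\emph{Expected main difficulty.} The hypothesis $\Im(z)>\sqrt{3}\,pe^\delta$ is tightly calibrated — $\sqrt{3}$ is the cusp-$0$ threshold of Lemma \ref{imaggivesdef}, $p$ the Hecke dilation, $e^\delta$ the radius of $B$ — and the care-demanding point is to verify the imaginary-part inequality uniformly over all of $B$, not just at $w=Mz$, together with the character bookkeeping: that $b/p\sim_{\Gamma_0(N)}0$ via a $\gamma_b$ for which $\sigma^{-1}\gamma_b^{-1}\begin{pmatrix}1&b\\0&p\end{pmatrix}\sigma$ collapses to a single standard Hecke matrix and $\chi(\gamma_b)=\chi(p)$, so that the weights reorganise into the global factor $\chi(p)$; the fact $\mu_0=0$ is what lets the $k_b$ reindex the sum cleanly.
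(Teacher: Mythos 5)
Your proof is correct and follows essentially the same route as the paper's: each Hecke translate is pushed into the cusp-$0$ horoball by an explicit element of $\Gamma_0(N)$, Lemma \ref{imaggivesdef} identifies $\tilde{f}$ with $f_0\circ\sigma^{-1}$ there, the character bookkeeping yields the factor $\chi(p)$ together with the reindexing mod $p$, and \cite[Theorem 1.14]{IwaniecMethods} supplies the factor $|\hat{k}|$. The only difference is organisational: you commute the convolution past $T_p$ and apply the eigenfunction property once at the end (verifying the imaginary-part threshold uniformly over the $\delta$-ball), whereas the paper applies automorphy directly to $\tilde{f}_S$ via the matrices $M'$ and invokes the convolution identity at the intermediate points $M\left(\frac{z+b}{p}\right)$ -- the same calibration of $\sqrt{3}\,p\,e^{\delta}$ in either packaging.
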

\begin{proof}
First, consider the function $\tilde{f}_S\left(\frac{Mz+a}{p}\right)$ with $0<a<p$. Define $b$ by the congruences $b \equiv n(1-p) \pmod{N}$ and $b \equiv n-\overline{a} \pmod{p}$. Consider the matrix:
\begin{equation}
M'=M \begin{pmatrix}
1 & b \\ 0 & p
\end{pmatrix} M^{-1} \begin{pmatrix}
1 & a \\ 0 & p
\end{pmatrix}^{-1}.
\end{equation}
The entries of this matrix are computed as:
\begin{equation}
M' = \begin{pmatrix}
p & -a \\ n(1-p)-b & \frac{ab-an(1-p)+1}{p}
\end{pmatrix}.
\end{equation}
Using $b \equiv n-\overline{a} \pmod{p}$, the numerator of the lower right entry is seen to be a multiple of $p$, and therefore $M' \in \sl2(\mathbb{Z})$. Further, using $b \equiv n(1-p) \pmod{N}$, the lower left entry is seen to be a multiple of $N$. Consequently:

\begin{equation}
\tilde{f}_S\left(\frac{Mz+a}{p}\right)=\overline{\chi(M')}\tilde{f}_S\left(M'\begin{pmatrix}
1 & a \\ 0 & p \end{pmatrix}Mz\right) =\chi(p) \tilde{f}_S\left(M\left(\frac{z+b}{p}\right)\right).
\end{equation}
As $\Im(z) > \sqrt{3}pe^{\delta}$, $\Im\left(SM\left(\frac{z+b}{p}\right)\right) > \sqrt{3}e^{\delta}$, and so by Lemma \ref{imaggivesdef} and \cite[Theorem 1.14]{IwaniecMethods}:
\begin{equation}
\tilde{f}_S\left(M\left(\frac{z+b}{p}\right)\right) = |\hat{k}|f_0 \left( \sigma^{-1}M \left(\frac{z+b}{p}\right)\right) = |\hat{k}|f_0 \left(\begin{pmatrix}
1 & n \\ 0 & N
\end{pmatrix} \begin{pmatrix}
1 & b \\ 0 & p
\end{pmatrix} z \right).
\end{equation}
Interchanging these final two matrices gives:
\begin{equation}
\tilde{f}_S\left(\frac{Mz+a}{p}\right) = |\hat{k}|f_0 \left(\begin{pmatrix}
1 & \frac{np-\overline{a}}{N} \\ 0 & p
\end{pmatrix} \frac{z+n}{N}\right).
\end{equation}
Note that $\frac{np-\overline{a}}{N} \in \mathbb{Z}$ and, due to the periodicity of $f_0$, this matrix need only be defined up to mod $p$. The full result is arrived at by following the same process for the functions $\tilde{f}_S \left(\frac{Mz}{p}\right)$ and $\tilde{f}_S(pMz)$.
\end{proof}

Using a similar argument for any matrix $M \in A$ associated with cusp $\frac{1}{d}$ where $(d,N/d)=1$, we see that for points $z$ with $\Im(z)>pe^{\delta}$:
\begin{equation}
T_p\tilde{f}_S(Mz) = |\hat{k}|\chi_{N/d}(p)T_p^{\overline{\chi_{N/d}}\chi_d} f_\mathfrak{a}\left( \frac{z+n}{N}\right).
\end{equation}

The case that $M=I$ and $\Im(z) > \frac{pe^\delta}{\sqrt{3}}$ is immediate. By the construction of any $T_m$ from $T_p$ terms, it follows that these results still hold for a more general $T_m$. By assumption, these Fourier expansions obey Hecke relations, and the cusp parameter of all such cusps is 0. Consequently, for points in the respective parts of the fundamental domain we have:
\begin{equation}
g(z)=\sum_{n \in \mathbb{Z}-\{0\}} \frac{c(\mathfrak{a},n)}{\sqrt{2 \pi |n|}}W_{ir}(2\pi |n|y)e(nx),
\end{equation}
with:
\begin{equation}
c(\mathfrak{a},\pm 1)=\hat{k}(r)a(\mathfrak{a},\pm 1)\prod_{\psi \in \Psi(N,\chi,m)}\left(a(\infty, m)-\sum_{ab=m}\psi(a/b)\chi(b)(b/a)^{ir}\right).
\end{equation}
As noted after (\ref{heckerelation}), $f_{\infty}$ being normalised implies that $|a(\mathfrak{a},\pm 1)|=1$. By associating these cusps with the Hall divisors of $N$, we obtain the bound:
\begin{equation}
||g||_2^2 \le \frac{|c(\infty,1)|^2}{2}\sum_{d \mid N, (d,N/d)=1} \int_{m^{\#\Psi}e^\delta \ell}^\infty W_{ir}(2 \pi y)^2 \frac{dy}{y^2},
\end{equation}
where:
\begin{equation}
\ell = \begin{cases}
\frac{1}{\sqrt{3}} & \text{if }N \ge 3 \text{ and }d=N,\\
\sqrt{3} & \text{if }N \ge 3 \text{ and }d=1,\\
1 & \text{else.}
\end{cases}
\end{equation}
From \cite[Proposition 3.8]{BookerVenkatesh}, by choosing:
\begin{equation}
k(z,w)=3\left(1-\frac{|z-w|^2}{\delta^2 \Im(z)\Im(w)}\right)^2,
\end{equation}
we deduce:
\begin{equation}
\frac{1}{|\hat{k}(r)|} \le \frac{2}{\pi \delta^2}.
\end{equation}
Consequently, we obtain the bound:
\begin{equation}
\label{denominatorbound}
||g||_2 \ge \frac{11 \delta^2}{10} \prod_{\psi \in \Psi(N,\chi,m)}\left(a(\infty, m)-\sum_{ab=m}\psi(a/b)\chi(b)(b/a)^{ir}\right) \sqrt{D},
\end{equation}
where $D$ is defined as in Theorem \ref{maintheorem}. This gives the denominator of Theorem \ref{maintheorem} and we move to finding an upper bound on the $||(\Delta-\lambda)g||_2$ term appearing as the numerator in Lemma \ref{prebound}.

Note that $\diamondsuit$ commutes with $(\Delta-\lambda)$, and that the norm of $\diamondsuit$ is dependent on the norm of $T_m$. It is conjectured that the norm of $T_p$ is bounded by 2, but the best result in this direction, in \cite{KimSarnak}, bounds the norm of $T_p$ by $p^{\frac{7}{64}}+p^{-\frac{7}{64}}$. Define $N(T_m)$ by $N(T_1)=1$ along with the following relation for $p \mid m$:
\begin{equation}
N(T_m) = N(T_{\frac{m}{p}})(p^{\frac{7}{64}}+p^{-\frac{7}{64}}) + \underset{p^2 \mid m}{\delta}N(T_{\frac{m}{p^2}}).
\end{equation}
We deduce that the norm of $\diamondsuit$ is bounded above by $(N(T_m)+2)^{\#\Psi}$ and thus:
\begin{equation}
||(\Delta-\lambda)g||_2 \le (N(T_m)+2)^{\#\Psi}||(\Delta-\lambda)\tilde{f}_S||_2.
\end{equation}

To bound $||(\Delta-\lambda)\tilde{f}_S||_2^2$ we separate the $L^2$ norm into a sum over the coset representatives:

\begin{equation}
||(\Delta-\lambda)\tilde{f}_S||_2^2 = \sum_{M \in A}||(\Delta-\lambda)\tilde{f}_S||_{2, \mathcal{F}_M}^2,
\end{equation}
where $||f||_{2,\mathcal{F}_M}$ refers to the evaluation of the 2-norm over the area $\mathcal{F}_M$. Fix $M \in A-\{I, S, ST,ST^{-1}\}$, and let $\mathfrak{a}$ and $\sigma$ be the associated cusp and cusp-normalising map. Note that because $f_\mathfrak{a}(z)$ is a $\Delta$-eigenfunction with eigenvalue $\lambda$, so is $f_\mathfrak{a}(\sigma^{-1}z)$. Consequently, $(\Delta-\lambda)\tilde{f}_S = (\tilde{f}-f_\mathfrak{a}\sigma^{-1}) \star (\Delta-\lambda)k$. Let $B(\delta)$ be the neighbourhood of points with hyperbolic distance at most $\delta$ from the arc $\{z=e(\theta): \frac{1}{6}\le \theta \le \frac{1}{3}\}$. We deduce:

\begin{equation}
||(\Delta-\lambda)\tilde{f}_S||_{2,\mathcal{F}_M}^2 \le C ||\tilde{f}-f_\mathfrak{a}\sigma^{-1}||_{\infty, MB(\delta)}^2,
\end{equation}
where:
\begin{equation}
C = \vol(B(\delta) \cap \mathcal{F}_{I})\left(\int_{\mathbb{H}}|(\Delta-\lambda)k(z,i)|d\mu (z)\right)^2.
\end{equation}
If $N<3$ then the matrices $I,S,ST$ and $ST^{-1}$ are treated the same way. If $N \ge 3$ then let $B'(\delta)$ be the neighbourhood of points with distance at most $\delta$ from the arc $\{z=\frac{e(\theta)}{\sqrt{3}}: \frac{1}{12} \le \theta \le \frac{5}{12}\}$. The bound on the remaining matrices is:
\begin{equation}
\sum_{M \in \{I, S, ST, ST^{-1}\}}||(\Delta-\lambda)\tilde{f}_S||_{2,\mathcal{F}_M}^2 \le C' \max(||\tilde{f}-f_0\sigma^{-1}||_{\infty,B'(\delta)}^2,||\tilde{f}-f_\infty||_{\infty, B'(\delta)}^2),
\end{equation}
where:
\begin{equation}
C' = \vol\left(\bigcup_{M \in \{I, S, ST, ST^{-1}\}} B'(\delta) \cap \mathcal{F}_{M}\right)\left(\int_{\mathbb{H}}|(\Delta-\lambda)k(z,i)|d\mu (z)\right)^2.
\end{equation}

Following \cite[Proposition 3.8]{BookerVenkatesh} we find the bound $C \le \frac{ 2\delta \pi^2(12+1/16+4/9)^2}{\sqrt{3}}$. By the same method, let $\mathcal{F}'=\{z \in \mathbb{C} : |\Re(z)| \le \frac{1}{2}, |z| \ge \frac{e^{-\delta}}{\sqrt{3}}\}$, and note that:
\begin{equation}
\left(\bigcup_{M \in \{I, S, ST, ST^{-1}\}} B'(\delta) \cap \mathcal{F}_{M}\right) \subset \mathcal{F}' \backslash e^{\delta} \mathcal{F}'.
\end{equation}
Thus, the volume of this neighbourhood is bounded:
\begin{align}
\vol \left( \bigcup_{M \in \{I, S, ST, ST^{-1}\}} B'(\delta) \cap \mathcal{F}_{M} \right) &\le 2 \int_{x=\frac{1}{2}}^{\frac{e^{\delta}}{2}} \int_{y=\frac{e^{\delta}}{2\sqrt{3}}}^{\infty}\frac{dxdy}{y^2}\\
& = (1-e^{-\delta})2\sqrt{3} \le 2\sqrt{3}\delta.
\end{align}
Consequently, we deduce the bound $C' \le 2 \sqrt{3} \delta \pi^2 (12+1/16+4/9)^2$. Using both of these bounds gives:

\begin{equation}
\label{numeratorbound}
||(\Delta-\lambda)g||_2 \le 43 \sqrt{\delta} (N(T_m)+2)^{\#\Psi}\sqrt{ \max(\{E(M):M \in A\})},
\end{equation}
with $E$ as in Theorem \ref{maintheorem}. The theorem in full follows from applying (\ref{numeratorbound}) and (\ref{denominatorbound}) to Lemma \ref{prebound}.

\section{Practical computation}
\label{SectionPractical}

Theorem \ref{maintheorem} reduces the problem of certifying a purported Maass cusp form to one of bounding differences between the given truncated Fourier expansions over small neighbourhoods of points. We now explain how these differences are bounded in practice.

We begin with three lemmas which will identify the exact functions we need to bound to find $||\tilde{f}-f_{\mathfrak{a}}\sigma^{-1}||_{\infty, M B(\delta)}^2$. Firstly, we show that the definition of $\tilde{f}(z)$ for $z \in \mathcal{F}_M$ is in many instances given by the cusp associated with $M$, even if $M \not \in A$.

\begin{lemma}
\label{cuspgivesdef}
Let $M = \begin{pmatrix}
a & b \\ c & d
\end{pmatrix} \in \sl2(\mathbb{Z})$ be any matrix such that $\mathfrak{a}=\frac{a}{c}$ is a cusp with an associated Fourier expansion. Let $\sigma$ be the cusp-normalising map of $\mathfrak{a}$, and let $v=(c,N/c)$. For any $z \in \mathcal{F}_M$ we have $\tilde{f}(z)=\overline{\chi\left(1-\frac{aN\lfloor \frac{v(d+1)}{N}\rfloor}{v}\right)}f_{\mathfrak{a}}(\sigma^{-1}z)$.
\end{lemma}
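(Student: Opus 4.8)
The plan is to reduce the statement about the function $\tilde{f}$, which is defined in terms of the \emph{canonical} coset representative in $A$ associated to a cusp, to a comparison between an arbitrary $M \in \sl2(\mathbb{Z})$ with $M(\infty) = \mathfrak{a}$ and that canonical representative. Concretely, if $z \in \mathcal{F}_M$ then $z$ is $\Gamma_0(N)$-equivalent to a point $z' \in \mathcal{F}_{M_0}$ for the canonical $M_0 \in A$ associated to $\mathfrak{a}$, say $z' = \gamma z$ with $\gamma = M_0 T^r M^{-1} \in \Gamma_0(N)$ for a suitable $r \in \mathbb{Z}$ (the $T^r$ accounts for the width of the cusp, i.e.\ the stabiliser of $\mathfrak{a}$ in $\Gamma_0(N)$). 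By the definition of the pullback in \eqref{tildedef} we get $\tilde{f}(z) = \overline{\chi(\gamma)}\, \tilde{f}(z')$, and on $\mathcal{F}_{M_0}$ Lemma \ref{imaggivesdef} (or directly the definition of $\tilde f$) identifies $\tilde f(z') = f_{\mathfrak a}(\sigma^{-1} z')= f_{\mathfrak a}(\sigma^{-1}\gamma z)$. Since $\sigma^{-1}\gamma\sigma$ stabilises $\infty$, it is $\pm T^{s}$ for some integer $s$, and the periodicity of $f_{\mathfrak a}$ (its Fourier expansion \eqref{FourierExpansionForm} has period-type behaviour governed by $\mu_{\mathfrak a}$, which is $0$ for the cusps $\tfrac1d$ under consideration) kills the $T^s$ part, leaving $\tilde f(z) = \overline{\chi(\gamma)} f_{\mathfrak a}(\sigma^{-1} z)$.

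The crux is therefore the bookkeeping that identifies $\chi(\gamma)$ explicitly as $\chi\!\left(1 - \tfrac{aN}{v}\lfloor \tfrac{v(d+1)}{N}\rfloor\right)$. First I would pin down $M_0 \in A$: by the construction of $A$, $M_0$ has bottom row $(c, d_0)$ where $d_0$ is the representative of $d \bmod \tfrac{N}{v}$ lying in $[-1, \tfrac Nv - 1)$ (here I use that $M(\infty) = M_0(\infty) = \mathfrak a$ forces the same $c$ up to the normalisations in the algorithm, and that $\gcd(c, N/c) = v$), and upper-left entry $a_0$ the chosen representative of $a \bmod v$. Then $\gamma = M_0 T^r M^{-1}$ for the unique $r$ making the bottom-left entry of $M_0 T^r$ equal to... — more cleanly, $\gamma \in \Gamma_0(N)$ is determined by $\gamma M = M_0 T^r$, and one computes $\gamma = M_0 T^r M^{-1}$ and reads off $\chi(\gamma) = \chi(\gamma_{22}) = \chi(\gamma_{11}) $ from the explicit $2\times 2$ product. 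The floor function $\lfloor \tfrac{v(d+1)}{N}\rfloor$ is exactly what records how far $d$ is from its canonical representative $d_0$, i.e.\ $d - d_0 = \tfrac Nv \lfloor \tfrac{v(d+1)}{N}\rfloor$ given the half-open interval $[-1, \tfrac Nv - 1)$ chosen in step 3; substituting this into the entries of $\gamma$ and using $\det = 1$ together with the congruences $a a_0 \equiv$ (something) $\pmod{c}$ produces the stated diagonal entry modulo $N$.

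After that, two routine verifications remain: (i) that the multiplier $\overline{\chi(1 - \tfrac{aN}{v}\lfloor\cdots\rfloor)}$ really is independent of the choice of $r$ and of which point $z' \in \mathcal F_{M_0}$ one lands in — this follows because changing $r$ multiplies $\gamma$ on the right by $\sigma T \sigma^{-1} \cdot(\text{stabiliser element})$, whose $\chi$-value is $e(\mu_{\mathfrak a}) = 1$ here; and (ii) that $M_0$ as I described is genuinely the element of $A$ associated to $\mathfrak a$, which is Lemma \ref{Arightcoset}/\ref{Adistinctcusps} applied to the coset of $M$. I would also note the sanity check that when $M \in A$ itself (so $d = d_0 \in [-1, \tfrac Nv - 1)$), the floor vanishes and we recover $\tilde f(z) = f_{\mathfrak a}(\sigma^{-1} z)$ consistent with Lemma \ref{imaggivesdef}.

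The main obstacle I anticipate is purely computational rather than conceptual: carrying the explicit $2\times 2$ matrix product $M_0 T^r M^{-1}$ while keeping track of the three interacting moduli ($c$, $v = \gcd(c, N/c)$, and $N/c$) and the half-open interval conventions from the algorithm for $A$, so that the diagonal entry collapses precisely to $1 - \tfrac{aN}{v}\lfloor \tfrac{v(d+1)}{N}\rfloor$ modulo $N$ and not some off-by-one variant. One has to be careful that $\chi$ is only evaluated at integers coprime to $N$, so a preliminary check that $1 - \tfrac{aN}{v}\lfloor\cdots\rfloor$ is a unit mod $N$ (equivalently mod $\mathfrak f(\chi)$) is needed — this should follow from $\gcd(a, c) = 1$ and $\mathfrak f(\chi) \mid N$. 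The rest is a matter of assembling the pullback identity, the periodicity of $f_{\mathfrak a}$, and the cocycle property of $\chi$ on $\sl2(\mathbb Z)$.
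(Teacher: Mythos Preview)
Your proposal is correct and follows essentially the same route as the paper: locate the element $M'\in A$ in the $\Gamma_0(N)$-coset of $M$, identify the pullback $\gamma=M'M^{-1}$, read off $\chi(\gamma)$ from its lower-right entry using $d-d'=\tfrac{N}{v}\lfloor\tfrac{v(d+1)}{N}\rfloor$, and finish via the periodicity of $f_{\mathfrak a}$. The paper's execution is slightly leaner than yours---it builds $M'$ with the \emph{same} left column $(a;c)$ as $M$ and observes $M'=MT^n$ with $n$ a multiple of the cusp width, so your extra $T^r$ and your concern about $a_0\neq a$ never arise---but the underlying argument is the same.
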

\begin{proof}
Fix $-1 \le d' < \frac{N}{v}-1$ such that $d' \equiv d \pmod{\frac{N}{v}}$. Define the matrix:
\begin{equation}
M'=\begin{pmatrix}
a & \frac{ad'-1}{c} \\ c & d'
\end{pmatrix}.
\end{equation}
Note that $M \in \sl2(\mathbb{Z})$ implies that $(a,c)=1$. Further, either $a=0$ or $ad \equiv ad' \equiv 1 \pmod{c}$, and in both cases we have $M'\in A$. The lower left entry of $M'M^{-1}$ is $(d-d')c$ which is a multiple of $N$, and so $M'M^{-1}$ is the pullback matrix of $z$. The lower right entry is $a(d'-d)+1\pmod{N}$ and thus:
\begin{equation}
\tilde{f}(z)=\overline{\chi(M'M^{-1})}\tilde{f}(M'M^{-1}z)=\overline{\chi \left(1-\frac{aN\lfloor \frac{v(d+1)}{N}\rfloor}{v}\right)}f_{\mathfrak{a}}(\sigma^{-1}M'M^{-1}z).
\end{equation}
Fix $n=\frac{d'-d}{c}$, then $n$ is a multiple of the width of $\mathfrak{a}$ and $M'=MT^n$. Consequently:
\begin{equation}
f_{\mathfrak{a}}(\sigma^{-1}M'M^{-1}z)=f_{\mathfrak{a}}(\sigma^{-1}MT^nM^{-1}z)=f_{\mathfrak{a}}(\sigma^{-1}z).
\end{equation}
\end{proof}

Next, we reduce the area on which the difference needs to be evaluated. Assume that either $N < 3$ or $\mathfrak{a} \not \in \{0,\infty\}$. The neighbourhood $B(\delta)$ overlaps $\mathcal{F}_{M'}$ for:
\begin{equation}
M' \in \{T^{-1}, I, T, TS, TST, ST^{-1}, S, ST, T^{-1}ST^{-1}, T^{-1}S\}.
\end{equation}

We assume from now on that $\mathfrak{f}(\chi) \mid N/N_1$ where $N_1$ is the maximal value such that $N_1^2 \mid N$. We show that when evaluating the maximum of $||\tilde{f}-f_{\mathfrak{a}}\sigma^{-1}||_{\infty, M B(\delta)}^2$ across all $M$, some of these sections are considered multiple times, and so the duplicates can be discarded.

\begin{lemma}
Fix $M_1 \in A$ with associated cusp $\frac{a}{c}$ such that $\mathfrak{f}(\chi) \mid N/(c,N/c)$. For any $z \in B(\delta) \cap \left( \mathcal{F}_{TS} \cup \mathcal{F}_{TST} \cup \mathcal{F}_{T^{-1}ST^{-1}} \cup \mathcal{F}_{T^{-1}S}\right)$, there exists some $M_2 \in A$ and $z' \in B(\delta) \cap \left( \mathcal{F}_{ST^{-1}} \cup \mathcal{F}_S \cup \mathcal{F}_{ST} \right)$ such that:
\begin{equation}
\tilde{f}(M_1z)-f_{\mathfrak{a}}(\sigma^{-1}M_1z)=\tilde{f}(M_2z')-f_{\mathfrak{a}}(\sigma^{-1}M_2z').
\end{equation}
\end{lemma}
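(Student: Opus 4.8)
The plan is to exhibit, for each of the four ``extra'' cosets $\mathcal{F}_{TS}$, $\mathcal{F}_{TST}$, $\mathcal{F}_{T^{-1}ST^{-1}}$, $\mathcal{F}_{T^{-1}S}$, an explicit element $\gamma \in \sl2(\mathbb{Z})$ that carries the relevant part of $B(\delta)$ into one of the three ``kept'' cosets $\mathcal{F}_{ST^{-1}}$, $\mathcal{F}_S$, $\mathcal{F}_{ST}$, and then to check that both $\tilde f$ and $f_{\mathfrak{a}}\sigma^{-1}$ transform the same way under $\gamma$, so the difference is unchanged. The natural candidates for $\gamma$ are the elementary relations $TS = S\cdot(S^{-1}TS)$-type identities, or more directly $ST^{-1}\cdot(ST)^{-1}$ and its conjugates; concretely, since $B(\delta)$ is a neighbourhood of the arc $\{e(\theta):\tfrac16\le\theta\le\tfrac13\}$, which is stable under $z\mapsto -1/z$ and under $z\mapsto -\bar z$-type symmetries, the points of $B(\delta)\cap\mathcal{F}_{TS}$ etc.\ are $T^{\pm1}$- or $S$-translates of points of $B(\delta)\cap(\mathcal{F}_{ST^{-1}}\cup\mathcal{F}_S\cup\mathcal{F}_{ST})$. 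So I would first write down, for each extra coset, the identity $M_1' = M_2'\gamma$ relating the literal coset-representative word $M_1'\in\{TS,TST,T^{-1}ST^{-1},T^{-1}S\}$ to a kept word $M_2'\in\{ST^{-1},S,ST\}$ times an element $\gamma$ that fixes the arc setwise; then set $z' = \gamma z$ and $M_2 = $ the element of $A$ in the same $\Gamma_0(N)$-coset as $M_1 M_2'(M_1')^{-1}$, adjusted so that $M_2 z' \in \mathcal{F}_{M_2}$.

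Next I would invoke Lemma \ref{cuspgivesdef}: for any $M\in\sl2(\mathbb{Z})$ whose bottom-left/top-left columns give the cusp $\mathfrak a$, the value $\tilde f(Mz)$ equals $\overline{\chi(\cdot)} f_{\mathfrak a}(\sigma^{-1}Mz)$ with an explicit character factor depending only on the entry $d$ of $M$ modulo $N/v$. The key point is that the hypothesis $\mathfrak{f}(\chi)\mid N/(c,N/c)$ forces this character factor to be trivial, so on the nose $\tilde f(M_1 z) = f_{\mathfrak a}(\sigma^{-1}M_1 z)$ up to the same harmless prefactor on both copies; more importantly it means the prefactor is the \emph{same} on the $M_1,z$ side and the $M_2,z'$ side, because $\gamma$ fixing the arc means $M_1 z$ and $M_2 z'$ differ by an element of $\Gamma_0(N)$ acting on $\mathbb{H}$ (not just on cusps), and $\Gamma_0(N)$-translation changes $\tilde f$ by $\chi$ of the pullback, which is absorbed. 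So after choosing $\gamma$ I would verify (i) $M_1 z$ and $M_2 z'$ are $\Gamma_0(N)$-equivalent as points of $\mathbb{H}$, via the explicit word $M_1 M_2'(M_1')^{-1}\in\Gamma_0(N)$ (this is where the congruence conditions on the coset-generation algorithm, Lemma \ref{Arightcoset}, get used), and (ii) the accumulated character factor cancels between $\tilde f$ and $f_{\mathfrak a}\sigma^{-1}$.

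The last routine piece is to confirm that $z' = \gamma z$ actually lands in $B(\delta)\cap(\mathcal{F}_{ST^{-1}}\cup\mathcal{F}_S\cup\mathcal{F}_{ST})$: membership in $B(\delta)$ is automatic since $\gamma$ preserves the defining arc (up to the $\Gamma_0(1)$-symmetry of the arc, which must be checked case by case — the arc from $e(1/6)$ to $e(1/3)$ maps to itself under $S$ and is permuted among the three kept cosets under $T^{\pm1}$), and membership in the correct $\mathcal{F}_{M}$ is then a matter of tracking which of the three sectors the image falls into. I expect the main obstacle to be the bookkeeping in verifying that $M_2\in A$ can always be chosen (rather than merely some $\sl2(\mathbb{Z})$ representative) while simultaneously keeping $M_2 z'$ in the \emph{closure} of the right $\mathcal{F}_{M_2}$ and keeping the character prefactor trivial; this is where the standing assumption $\mathfrak f(\chi)\mid N/N_1$ is essential, and one has to be careful that the four extra cosets don't force a representative outside $A$ or an obstruction cusp $0$ or $\infty$ (which is why the hypothesis restricts to $\mathfrak a$ with $\mathfrak{f}(\chi)\mid N/(c,N/c)$, excluding the problematic cases handled separately). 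Once the four explicit $\gamma$'s and the corresponding $M_2$'s are tabulated, the identity of differences is immediate from Lemma \ref{cuspgivesdef}.
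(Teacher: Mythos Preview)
Your overall strategy---move $z$ by an explicit $\gamma\in\sl2(\mathbb{Z})$ into a kept coset and track the effect on both $\tilde f$ and $f_{\mathfrak a}\sigma^{-1}$---is the right one, and coincides with the paper's. But the execution is much more complicated than necessary, and one point is genuinely confused.

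The paper simply takes $\gamma=T^{\mp1}$: for $z\in B(\delta)\cap(\mathcal F_{TS}\cup\mathcal F_{TST})$ set $z'=z-1$ and let $M_2\in A$ be the right-coset representative of $M_1T$; for $z\in B(\delta)\cap(\mathcal F_{T^{-1}S}\cup\mathcal F_{T^{-1}ST^{-1}})$ set $z'=z+1$ and use the representative of $M_1T^{-1}$. No $S$-symmetry of the arc is needed; the reason $z'\in B(\delta)$ is just that $T^{\mp1}$ carries the endpoint $e(1/6)$ to the other endpoint $e(1/3)$ (and vice versa), so the distance of $z'$ to the arc is bounded by the distance of $z$ to the relevant endpoint, which is at most $\delta$. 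Note also that $M_1T^{\pm1}$ has the \emph{same} left column as $M_1$, hence the same associated cusp $\mathfrak a$; your formula $M_2\sim M_1M_2'(M_1')^{-1}$ with $M_1'=TS$, $M_2'=S$ gives $M_1T^{-1}$ rather than $M_1T$, so your bookkeeping is slightly off.

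More importantly, you misidentify what the hypothesis $\mathfrak f(\chi)\mid N/(c,N/c)$ does. It does \emph{not} force $\tilde f(M_1z)=f_{\mathfrak a}(\sigma^{-1}M_1z)$ (if it did, both sides of the claimed identity would vanish and there would be nothing to prove). Rather, in the generic case $M_1T\in A$ one has literally $M_2z'=M_1z$ and the identity is trivial; the hypothesis is only used when $M_1T\notin A$ (i.e.\ when the lower-right entry $d+c$ overflows the range $[-1,N/v-1)$), in which case $M_2z'$ and $M_1z$ differ by an element of $\Gamma_0(N)$ and, via Lemma~\ref{cuspgivesdef}, $\tilde f(M_2z')=\overline{\chi(1-aN/v)}\,\tilde f(M_1z)$. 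The conductor condition makes $\chi(1-aN/v)=1$, so $\tilde f(M_2z')=\tilde f(M_1z)$; the same adjustment applies to $f_{\mathfrak a}\sigma^{-1}$. That is the entire argument.
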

\begin{proof}
Assume $z \in B(\delta) \cap (\mathcal{F}_{TS} \cup \mathcal{F}_{TST})$. Take $z'=z-1$ and let $M_2 \in A$ be the right-coset representative of $M_1T$. We see that $M_2=M_1T$, whereupon $\tilde{f}(M_2z')=\tilde{f}(M_1z)$, unless $d+c \ge \frac{N}{v}$. In that case, by Lemma \ref{cuspgivesdef}, $\tilde{f}(M_2z')=\overline{\chi(1-\frac{aN}{v})}\tilde{f}(M_1z)$, and as $\mathfrak{f}(\chi) \mid N/v$ this is again $\tilde{f}(M_1z)$.

The distance of $z'$ from the arc is at most the distance from $e(1/6)$, which equals the distance of $z$ from $e(1/3)$, and so $z' \in B(\delta)$. If $z \in B(\delta) \cap (\mathcal{F}_{T^{-1}S} \cup \mathcal{F}_{T^{-1}ST^{-1}})$ then the same result holds, taking $z'=z+1$ and letting $M_2 \in A$ be the right-coset representative of $M_1T^{-1}$.
\end{proof}

Further, if $z \in \mathcal{F}_{T^n}$ for any $n \in \mathbb{Z}$ then $\tilde{f}(Mz)=f_{\mathfrak{a}}(\sigma^{-1}Mz)$. Consequently, we only need to bound the difference between $\tilde{f}$ and $f_{\mathfrak{a}}\sigma^{-1}$ for any $Mz$ with $z \in B(\delta) \cap (\mathcal{F}_{ST^{-1}} \cup \mathcal{F}_S \cup \mathcal{F}_{ST})$. By Lemma \ref{cuspgivesdef}, $\tilde{f}$ is defined by the same Fourier expansion for all such $z$. We now identify the exact function giving $\tilde{f}$ on these points.

\begin{lemma}
\label{M2matrix}
Fix a level $N \ge 3$ and matrix $M_1 = \begin{pmatrix}
a_1 & b_1 \\ c_1 & d_1
\end{pmatrix} \in A - \{I, S, ST, ST^{-1}\}$. Let $c_2=(d_1,N)$ and $v=(c_2,N/c_2)$. If $c_2=1$ then define $a_2=0$, else fix $a_2$ coprime to $c_2$ such that $a_2 \equiv \frac{-d_1}{c_1c_2} \pmod{v}$. Let $-1 \le d_2 < \frac{N}{v}-1$ be the element satisfying $d_2 \equiv \frac{-c_1c_2}{d_1} \pmod{N/c_2}$ and (if $c_2 \neq 1$) $a_2d_2 \equiv 1 \pmod{c_2}$. Let $M_2 \in \sl2(\mathbb{Q})$ be the matrix with entries $\begin{pmatrix}
a_2 & \cdot \\ c_2 & d_2
\end{pmatrix}$ and let $\mathfrak{b}$ be the cusp $\frac{a_2}{c_2}$. Then for $z \in \mathcal{F}_{ST^{-1}} \cup \mathcal{F}_S \cup  \mathcal{F}_{ST}$ we have that $\tilde{f}(M_1z)=\overline{\chi\left(\frac{d_2-a_1(d_1d_2+c_1c_2)}{c_1}\right)}f_{\mathfrak{b}}(M_2z)$.
\end{lemma}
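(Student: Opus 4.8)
The plan is to pull the point $M_1 z$ back to the standard fundamental domain $\bigcup_{M\in A}\mathcal F_M$ and read off which Fourier expansion governs $\tilde f$ there. Since $z$ ranges over the three copies of the standard $\sl2(\mathbb Z)$-fundamental domain abutting the cusp $0$, the point $M_1 z$ ranges over the three copies abutting $M_1(0) = \tfrac{b_1}{d_1}$. First I would verify that $\tfrac{b_1}{d_1}$ is $\Gamma_0(N)$-equivalent to $\mathfrak b = \tfrac{a_2}{c_2}$ and that $M_2' := \begin{pmatrix} a_2 & (a_2d_2-1)/c_2 \\ c_2 & d_2 \end{pmatrix}$ is the member of $A$ attached to $\mathfrak b$ with lower-right entry $d_2$: one checks $c_2 = (d_1,N) \mid N$, that $a_2$ is coprime to $c_2$ and is the chosen representative modulo $v = (c_2, N/c_2)$, and that $-1 \le d_2 < N/v - 1$ with $a_2 d_2 \equiv 1 \pmod{c_2}$ — exactly the output conditions of the algorithm defining $A$ — so $\mathfrak b$ genuinely carries one of the supplied expansions $f_{\mathfrak b}$. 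The hypothesis $M_1 \notin \{I, S, ST, ST^{-1}\}$ is used to exclude the degenerate possibilities (in particular $c_2 = N$, which would force $M_1 = S$).

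Next I would exhibit an element $\gamma \in \Gamma_0(N)$ with $\gamma \cdot \tfrac{b_1}{d_1} = \tfrac{a_2}{c_2}$, taking $\gamma := M_2' T^k (M_1 S)^{-1}$ for a suitable $k \in \mathbb Z$. Writing out $\gamma$, its lower-left entry is $-c_1c_2 - d_1 d_2 - k c_2 d_1$, so membership in $\Gamma_0(N)$ amounts to solving $c_2 d_1 k \equiv -(c_1 c_2 + d_1 d_2) \pmod N$; since $\gcd(c_2 d_1, N) = c_2 v$ (here is where $c_2 = (d_1,N)$ enters) and $c_2 v \mid c_1 c_2 + d_1 d_2$ precisely because $d_2 \equiv -c_1 c_2/d_1 \pmod{N/c_2}$ (hence modulo $v$), such a $k$ exists. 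Then automorphy of $\tilde f$ — which follows from its definition, the pullback of $\gamma w$ being $\rho\gamma^{-1}$ when $\rho$ is that of $w$ — gives $\tilde f(M_1 z) = \overline{\chi(\gamma)}\,\tilde f(\gamma M_1 z)$, and for $z \in \mathcal F_{ST^j}$, $j \in \{-1,0,1\}$, the point $\gamma M_1 z$ lies in $\mathcal F_{M_2' T^{k+j}}$, a domain whose matrix sends $\infty$ to $\mathfrak b$. Applying Lemma \ref{cuspgivesdef} to each such matrix expresses $\tilde f(\gamma M_1 z)$ in terms of $f_{\mathfrak b}$; under the standing hypothesis $\mathfrak f(\chi) \mid N/N_1$ one has $\mathfrak f(\chi) \mid N/v$, which both kills the correction character in Lemma \ref{cuspgivesdef} and forces every cusp parameter to vanish, so the $j$-dependence collapses (the three Möbius images differ by an integer translation absorbed by the periodicity of $f_{\mathfrak b}$, and $\chi(\gamma)$ is unchanged since varying $j$ alters $\gamma$ only by a parabolic fixing $\mathfrak b$, on which $\chi$ is trivial).

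It then remains to package the result: a direct matrix computation identifies $\sigma_{\mathfrak b}^{-1}\gamma M_1$ with the matrix $M_2$ of the statement and evaluates $\chi(\gamma) = \chi(\gamma_{22})$, which using $a_1 d_1 - b_1 c_1 = 1$ and the evenness of $\chi$ reduces to $\chi\!\left(\tfrac{d_2 - a_1(d_1d_2 + c_1c_2)}{c_1}\right) = \chi(-b_1 d_2 - a_1 c_2)$.

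I expect the main obstacle to be precisely this last bookkeeping step together with the choice of $\gamma$: pinning down $k$, verifying $\gamma \in \Gamma_0(N)$ via the gcd identity $\gcd(c_2 d_1, N) = c_2 v$, and then confirming that $\chi(\gamma)$ and the entries of $M_2$ collapse to the exact closed forms stated — all while tracking the corner case $c_2 = 1$ (where $a_2 = 0$ and $d_2$ is determined by a single congruence) and checking that on all of $M_1(\mathcal F_{ST^{-1}} \cup \mathcal F_S \cup \mathcal F_{ST})$ it is the ``else'' branch of the definition of $\tilde f$, rather than the $f_\infty$ branch, that is in force.
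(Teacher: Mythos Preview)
Your approach is essentially the paper's: exhibit a pullback $\gamma\in\Gamma_0(N)$ carrying $\mathcal F_{M_1S}$ to $\mathcal F_{M_2}$, check its lower-left entry vanishes mod $N$, and read off $\chi(\gamma)$. The paper simply takes $M'=M_2SM_1^{-1}$ with no $T^k$ inserted, because the congruence $d_2\equiv -c_1c_2/d_1\pmod{N/c_2}$ already forces $c_1c_2+d_1d_2\equiv 0\pmod N$ (write $d_1=c_2m$ with $(m,N/c_2)=1$ and multiply through by $c_2$); so your auxiliary parameter $k$ is unnecessary---$k=0$ always works---and the subsequent appeals to Lemma~\ref{cuspgivesdef} and the hypothesis $\mathfrak f(\chi)\mid N/N_1$ can be dropped, since $M'M_1z$ lands directly in $\mathcal F_{M_2}$ (resp.\ $\mathcal F_{M_2T^{\pm1}}$) for $z\in\mathcal F_S$ (resp.\ $\mathcal F_{ST^{\pm1}}$).
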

\begin{proof}
Firstly, note that $M_2 \in A$. Consider the matrix:
\begin{equation}
M'=M_2SM_1^{-1} = \begin{pmatrix} \cdot & \cdot \\ c_1c_2+d_1d_2 & \frac{d_2-a_1d_1d_2-a_1c_1c_2}{c_1}\end{pmatrix}
\end{equation}
Because $M_2, S$ and $M_1$ are all in $\sl2(\mathbb{Z})$, and $c_1c_2 \equiv -d_1d_2 \pmod{N}$, we have $M' \in \Gamma_0(N)$. It sends points in $\mathcal{F}_{M_1S}$ to points in $\mathcal{F}_{M_2}$, and so is the pullback matrix for points in the former. The same $M'$ can be used for $z \in \mathcal{F}_{ST}$ and $z \in \mathcal{F}_{ST^{-1}}$ also.
\end{proof}

We now write a given bounded element in terms of Fourier expansions. fix $M_1 \in A - \{I, S, ST, ST^{-1}\}$ and let $M_2$ and $M'$ be the corresponding matrices given by Lemma \ref{M2matrix}. Let $\mathfrak{a}$ and $\mathfrak{b}$ be the associated cusps, and let $\sigma_1$ and $\sigma_2$ be the associated cusp-normalising maps. Let $a_1, a_2, h_1, h_2$ be such that:
\begin{equation}
\sigma_1^{-1}M_1 = \begin{pmatrix}
1 & a_1 \\ 0 & h_1
\end{pmatrix},\;\;\sigma_2^{-1}M_2 = \begin{pmatrix}
1 & a_2 \\ 0 & h_1
\end{pmatrix}.
\end{equation}
Then, using the polar coordinates $z=e^{t+2\pi i \theta}$, we rewrite $f_{\mathfrak{a}}(\sigma^{-1}Mz)-\tilde{f}(Mz)$ as:
\begin{equation}
f_{\mathfrak{a}}\left(\frac{e^t\cos \theta + a_1}{h_1}, \frac{e^t \sin \theta}{h_1}\right) - \overline{\chi(M')}f_{\mathfrak{b}}\left(\frac{\cos(\pi-\theta)+a_2}{e^th_2},\frac{\sin(\pi-\theta)}{e^th_2}\right).
\end{equation}
This expression is denoted $E(t,\theta)$, and our aim is to bound $E(t,\theta)$ for all $M \in A - \{I, S, ST, ST^{-1}\}$ and $\{z \in B(\delta): |\Re(z)| \le \frac{1}{2}, |z|<1\}$. This is achieved by computing derivatives at $t_0,\theta_0$ and bounding at $t_1,\theta_1$. Letting $t_0=0$, set $F(u)=E(t_1u,\theta_0+(\theta_1-\theta_0)u)$, whereupon by Taylor's theorem:
\begin{equation}
\label{TaylorF1}
E(t_1,\theta_1)=F(1)=\sum_{i=0}^{d-1} \frac{F^{(i)}(0)}{i!}+\frac{F^{(d)}(u^*)}{d!},
\end{equation}
for some $0 \le u^* \le 1$. The $i$-th term is given by:

\begin{equation}
\label{Fiuexpression}
\frac{F^{(i)}(u)}{i!}=\sum_{r+s=i}\frac{t_1^r}{r!}\frac{(\theta_1-\theta_0)^s}{s!}\frac{\partial^{r+s}E}{\partial t^r \partial \theta^s}(t_1 u, \theta_0+(\theta_1-\theta_0)u).
\end{equation}

We take the $N+1$ points $e\left(\frac{\pi(N+j)}{3N}\right)$ for $j=0,1,...,N$ and fix $\delta$ such that for each point $z = e^{t_z + 2\pi i \theta_z} \in B(\delta)$ there exists some sample point $p=e^{t_p + 2 \pi i \theta_p}$ such that $\max(|t_z-t_p|,|\theta_z-\theta_p|) \le\frac{\pi}{6N}$. This gives the bound:
\begin{equation}
\left| \frac{F^{(i)}(0)}{i!}\right| \le \left( \frac{\pi}{6N}\right)^i \sum_{r+s=i}(r!s!)^{-1}\left| \frac{\partial^{r+s}E}{\partial t^r \partial \theta^s}(0, \theta_0)\right|.
\end{equation}
By (\ref{TaylorF1}) this bound, along with a bound for $F^{(d)}(u^*)$, bounds $f_{\mathfrak{a}}(\sigma^{-1}Mz)-\tilde{f}(Mz)$ over $\{z \in B(\delta):|\Re(z)|\le \frac{1}{2}\}$.

We now convert to rectangular coordinates, which are more convenient for computing the derivatives of Fourier expansions. Fix:
\begin{equation}
f(x,y) = f_{\mathfrak{a}}\left(\frac{e^t \cos \theta + a}{h}, \frac{e^t \sin \theta}{h}\right),
\end{equation}
where $\mathfrak{a}$ is a cusp of width $h$, and $0 \le a < h$. The derivatives are generated recursively. All derivatives are of the form:

\begin{equation}
\frac{\partial^{r+s}f}{\partial t^r \partial \theta^s}(x,y) = \sum_{k+\ell \le r+s} P(x,y;r,s,k,\ell)\frac{\partial^{k+\ell}f}{\partial x^k \partial y^\ell}(x,y),
\end{equation}
where $P=0$ whenever $k,l<0$ or $k+\ell>r+s$, $P(x,y;0,0,0,0)=1$ and all other $P$ are given by the relations:
\begin{align}
P(x,y;r+1,s,k,\ell)=&\left(x-\frac{a}{h}\right)\left(\frac{\partial}{\partial x}P(x,y;r,s,k,\ell)+P(x,y;r,s,k-1,\ell)\right) \nonumber \\
&+ y\left(\frac{\partial}{\partial y}P(x,y;r,s,k,\ell)+P(x,y;r,s,k,\ell-1)\right),  \\
P(x,y;r,s+1,k,\ell)=&-y\left(\frac{\partial}{\partial x}P(x,y;r,s,k,\ell)+P(x,y;r,s,k-1,\ell)\right) \nonumber \\
+&\left(x-\frac{a}{h}\right)\left(\frac{\partial}{\partial y}P(x,y;r,s,k,\ell)+P(x,y;r,s,k,\ell-1)\right).
\end{align}

We have now reduced the problem to one of computing derivatives of Fourier expansions of the form (\ref{FourierExpansionForm}). The $e((n+\mu_\mathfrak{a})x)$ factor is easily derived, so we focus instead on $W_{ir}(y)$. From the definition of the $K$-bessel function, we see:
\begin{equation}
W_{ir}''(y)=(1-\lambda y^{-2})W_{ir}(y).
\end{equation}
Consequently, we generate all derivatives of $W_{ir}$ recursively from $W_{ir}$ and $W_{ir}'$, as in \cite[(3.42)]{BookerVenkatesh}. It only remains to bound $|W_{ir}^{(d)}(y)|$ in order to compute $F^{(d)}(u^*)$ in (\ref{TaylorF1}). In \cite[Lemma 3.11]{BookerVenkatesh} it is shown that if $W_{ir}(z)=\sqrt{z}K_{ir}(z)$ for any $z$ with $\Re(z)>0$, then:
\begin{equation}
|W_{ir}^{(\ell)}(z)| \le \sqrt{(\pi/2)(|z|/\Re(z))}e^{-\Re(z)}.
\end{equation}
For $y \in \mathbb{R}_{>0}$, fix any $0<\epsilon \le y$ and apply Cauchy's theorem to the circle of radius $\epsilon$ around $y$. This gives:
\begin{equation}
|W_{ir}^{(\ell)}(y)| \le \sqrt{\pi} \ell! \epsilon^{-\ell}e^{y-\epsilon}.
\end{equation}
For any point $z_0$ with imaginary part $y_0$, let $y_{\min}=y_0e^{-\delta}$ be the least possible imaginary part of points in the $\delta$-neighbourhood of $z_0$. Taking the limit as $\epsilon \rightarrow y_{\min}$ shows that:
\begin{equation}
|W_{ir}^{(\ell)}(y_0)| \le \sqrt{\pi} \ell ! y_{\min}^{-\ell},
\end{equation}
and consequently:
\begin{equation}
\left|\frac{d^\ell}{dy^\ell}W_{ir}(2 \pi n y_0)\right| \le \sqrt{\pi} \ell ! y_{\min}^{-\ell}.
\end{equation}

This section has neglected discussion of $E(M)$ for $M \in \{I,S,ST,ST^{-1}\}$. When $N<3$ these cases are handled identically to the cases above. When $N \ge 3$, we follow the same process as above, but we have to take $2N+1$ sample points $\frac{e\left(\frac{\pi(N+2j)}{6N}\right)}{\sqrt{3}}$ for $j=0,1,...,2N$ instead.

We use the above method to present a certified $\Delta$-eigenvalue of a Maass cusp form of level 5 with quadratic character. As per \cite[Section 1.9]{Maass}, there are CM-forms in this space with $\Delta$-eigenvalues $\lambda$ of the form:
\begin{equation}
\lambda = \left(\frac{n \pi}{\sqrt{5}+1}\right)^2+\frac{1}{4},\;\; n \in \mathbb{Z}.
\end{equation}
Applying the method from \cite{Stromberg} away from these eigenvalues, we find the form:
\begin{center}
\begin{tabular}{r | l}
$\lambda$ & $24.199095328433893168...$\\
$a(\infty,2)$ & $-0.795198895476999501215...$\\
$a(\infty,3)$ & $-0.192808338985752020813...$\\
$a(\infty,4)$ & $-0.481481902375692542713...$\\
$a(\infty,5)$ & $-0.196583115622161865546...$\\
$a(\mathfrak{a},-n)$ & $-a(\mathfrak{a},n)$
\end{tabular}
\end{center}
Any space $\mathcal{S}(5,\left(\frac{\cdot}{5}\right),\lambda)$ containing non-CM forms would be multi-dimensional, and we use the assumption that this space is 2-dimensional (a consequence of \cite[Conjecture 3]{Dim1Conjecture}) to extract the purported Hecke eigenform in Appendix \ref{appendix}.

With these coefficients, we construct $f_\infty$ and $f_0$ meeting the requirements for Conditions \ref{conditions1}. Applying Theorem \ref{maintheorem} with the $M_0=40$, $d=45$ and $N=100$ gives Corollary \ref{theform}. It may be surprising that such a large amount of coefficients, differentiation and sample points was required to obtain certification only up to the second decimal place. The reason this is worse than the level 1 case is that our version of the $E$ function genuinely tests non-trivial automorphy relations, whereas at level 1 many of the relations fall trivially from the structure of the Fourier expansion. For example, $f_\infty(z)=\pm f_\infty(Sz)$ on the arc of points with absolute value 1, but there is no such guarantee that $f_\infty(z)=\pm f_0(\frac{Sz}{5})$ on the arc of points with absolute value $\frac{1}{\sqrt{3}}$.

\appendix

\section{Purported level 5 Maass cusp form with quadratic character}
\label{appendix}
The approximated eigenvalues for the purported Maass cusp form generated for Corollary \ref{theform} are:

\begin{center}
\begin{tabular}{r | l}
$\lambda$ & $24.1990953284330163389316822199$\\
$a(\infty,2)$ & $1.21716141180029715182640551470i$\\
$a(\infty,3)$ & $0.295119713346679445553519460316i$\\
$a(\infty,5)$ & $0.401708442188919067226691761974$\\
&$+ 0.915767616524056755556979034930i$\\
$a(\infty,7)$ & $-1.13887375514569341770109580569i$\\
$a(\infty,11)$ & $-0.0413962925778704152744983859814$\\
$a(\infty,13)$ & $-0.558344591842267393371226142461i$\\
$a(\infty,17)$ & $-0.212576664096405141444651460705i$\\
$a(\infty,19)$ & $0.608670097811209375904824033380$\\
$a(\infty,23)$ & $-1.20583190871320095924627312160i$\\
$a(\infty,29)$ & $0.162328581215640691510524606649$\\
$a(\infty,31)$ & $-0.556019364972687873912748994512$\\
$a(\infty,37)$ & $0.411889174612637537355782456909i$\\
$a(\infty,-1)$ & $-1$\\
\end{tabular}
\end{center}

\bibliography{papers}
\bibliographystyle{plain}

\end{document}